\documentclass{amsart}
\usepackage{amssymb, amsmath, mathrsfs, verbatim, url, bbm}

\makeatletter
\@namedef{subjclassname@2020}{%
  \textup{2020} Mathematics Subject Classification}
\makeatother

\theoremstyle{plain}
\newtheorem{theorem}{Theorem}[section]
\newtheorem{lemma}[theorem]{Lemma}
\newtheorem{proposition}[theorem]{Proposition}
\newtheorem{corollary}[theorem]{Corollary}

\theoremstyle{definition}
\newtheorem{definition}[theorem]{Definition}

\topmargin=-0.5cm
\textwidth=13.5cm
\textheight=23cm
\parindent=16pt

\newcommand{\Gr}{\mathsf{Gr}}

\newcommand{\bS}{\mathbf{\Sigma}}
\newcommand{\bP}{\mathbf{\Pi}}
\newcommand{\bD}{\mathbf{\Delta}}
\newcommand{\inte}{\mathsf{int}}
\newcommand{\leqT}{\leq_{\mathsf{T}}}
\newcommand{\equivT}{\equiv_{\mathsf{T}}}
\newcommand{\AC}{\mathsf{AC}}
\newcommand{\AD}{\mathsf{AD}}

\newcommand{\BP}{\mathsf{BP}}

\newcommand{\HH}{\mathcal{H}}
\newcommand{\PP}{\mathcal{P}}
\newcommand{\UU}{\mathcal{U}}
\newcommand{\RRR}{\mathbb{R}}
\newcommand{\QQQ}{\mathbb{Q}}

\begin{document}

\title{On the scope of the Effros theorem}

\author{Andrea Medini}
\address{Institut f\"{u}r Diskrete Mathematik und Geometrie
\newline\indent Technische Universit\"at Wien
\newline\indent  Wiedner Hauptstra\ss e 8–-10/104
\newline\indent 1040 Vienna, Austria}
\email{andrea.medini@tuwien.ac.at}
\urladdr{http://www.dmg.tuwien.ac.at/medini/}

\date{March 12, 2022}

\begin{abstract}
All spaces (and groups) are assumed to be separable and metrizable. Jan van Mill showed that every analytic group $G$ is Effros (that is, every continuous transitive action of $G$ on a non-meager space is micro-transitive). We complete the picture by obtaining the following results:
\begin{itemize}
\item Under $\AC$, there exists a non-Effros group,
\item Under $\AD$, every group is Effros,
\item Under $\mathsf{V=L}$, there exists a coanalytic non-Effros group.
\end{itemize}
The above counterexamples will be graphs of discontinuous homomorphisms.
\end{abstract}

\subjclass[2020]{Primary 54H11, 54H05; Secondary 22F05, 03E15, 03E45, 03E60.}

\keywords{Effros, group, micro-transitive, discontinuous homomorphism, graph, automatic continuity, coanalytic, $\mathsf{V=L}$, Baire property, determinacy.}

\maketitle

\tableofcontents

\section{Introduction}\label{section_introduction}

Throughout this article, we will be working in the theory $\mathsf{ZF}+\mathsf{DC}$, that is, the usual axioms of Zermelo-Fraenkel (without the Axiom of Choice, which we will denote by $\AC$) plus the principle of Dependent Choices, and we will denote by $\AD$ the Axiom of Determinacy (see \cite[Section 2]{carroy_medini_muller} for a thorough discussion). All spaces (including groups) will be assumed to be separable and metrizable. Furthermore, by group we will mean topological group, and by group action, we will mean continuous group action (see Section \ref{section_preliminaries} for more details).

The research presented here ultimately stems from the seminal work of E. G. Effros from \cite{effros}, which was in turn inspired by results of J. Glimm from \cite{glimm}. The results of \cite{effros} have impacted three rather diverse fields: $C^\ast$-algebras (this was the original motivation for Glimm and Effros), descriptive set theory (see \cite[Sections 3 and 4]{kechris_1994}), and topology (see \cite[Section~1]{charatonik_mackowiak} and \cite[Section 1]{van_mill_2008}). In fact, in his MathSciNet review of \cite{van_mill_2008}, V. Pestov described Theorem \ref{theorem_effros} as ``arguably the most important single result concerning Polish (= separable completely metrizable) topological groups.'' Furthermore, as is well-known, the classical Open Mapping Theorem and Closed Graph Theorem for separable Banach spaces easily follow from it (see Section \ref{section_classics} for much more general statements).

Here, we will follow the topologically-minded approach of \cite{ancel}, in which F. D. Ancel presented an alternative version of the results of Effros. In particular, he introduced to following notion.
\begin{definition}[Ancel]
Let $G$ be a group acting on a space $X$. We will say that this action is \emph{micro-transitive} if $Ux$ is a neighborhood of $x$ for every $x\in X$ and every neighborhood $U$ of the identity in $G$.
\end{definition}

Recall that an action of a group $G$ on a space $X$ is \emph{transitive} if for every $x,y\in X$ there exists $g\in G$ such that $gx=y$. The following will be the crucial notion for the remainder of the paper.
\begin{definition}
Let $G$ be a group. We will say that $G$ is \emph{Effros} if every transitive action of $G$ on a non-meager space is micro-transitive.
\end{definition}

At this point, the Effros theorem admits a particularly concise formulation (see \cite[Theorem 1]{ancel}).
\begin{theorem}[Effros]\label{theorem_effros}
Every Polish group is Effros.
\end{theorem}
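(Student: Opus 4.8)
The plan is to fix a point $x\in X$ and an open neighborhood $U$ of the identity $e\in G$, and to prove that $x\in\inte(Ux)$, where $Ux=\{gx:g\in U\}$. Throughout I would use three structural facts: the action is transitive (so $X=Gx$), each fixed $g\in G$ acts as a self-homeomorphism of $X$ with inverse given by $g^{-1}$, and the orbit map $g\mapsto gx$ is continuous. The argument naturally splits into a \emph{category} part, producing a ``closed'' form of micro-transitivity, and a \emph{completeness} part, upgrading it to the genuine statement; the latter is where the hypothesis that $G$ is Polish (rather than merely Baire) enters essentially.

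For the category part, I would first choose a symmetric open neighborhood $V$ of $e$ with $V^{-1}V\subseteq U$, using continuity of multiplication and inversion. Since $G$ is separable metrizable, hence Lindel\"of, the cover $\{gV:g\in G\}$ has a countable subcover, so $G=\bigcup_n g_nV$ for some sequence $(g_n)$; applying the orbit map and transitivity gives $X=Gx=\bigcup_n g_n(Vx)$. As $X$ is non-meager, some $g_n(Vx)$ is non-meager, and since $g_n$ is a homeomorphism of $X$ it follows that $Vx$ is itself non-meager. A non-meager set cannot have nowhere dense closure, so $\inte(\overline{Vx})\neq\emptyset$; as $Vx$ is dense in $\overline{Vx}$, it meets this open set, giving $v\in V$ with $vx\in\inte(\overline{Vx})$. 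Then $v^{-1}$ carries this open set to an open neighborhood of $x$, and since $V$ is symmetric,
\[
x\in v^{-1}\,\inte(\overline{Vx})\subseteq v^{-1}\overline{Vx}=\overline{v^{-1}Vx}\subseteq\overline{V^{-1}Vx}\subseteq\overline{Ux},
\]
so $x\in\inte(\overline{Ux})$. Applied to every point and every neighborhood, this yields the statement $(\dagger)$: for all $z\in X$ and every neighborhood $W$ of $e$, the set $\overline{Wz}$ is a neighborhood of $z$.

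For the completeness part, I would fix a complete compatible metric on $G$ (for instance the two-sided metric $d^*(g,h)=d(g,h)+d(g^{-1},h^{-1})$, to compensate for the fact that no one-sided invariant complete metric need exist) and a compatible metric on $X$, together with a decreasing sequence of symmetric open neighborhoods $U_n$ of $e$ with $\overline{U_0}\subseteq U$ and $U_{n+1}U_{n+1}\subseteq U_n$, so that every finite product of elements $a_i\in U_i$ lies in $U_0$. Starting from any $y$ in the neighborhood $\inte(\overline{U_1x})$ of $x$ furnished by $(\dagger)$, I would recursively build group elements $a_{n+1}\in U_{n+1}$ and points $x_{n+1}=a_{n+1}x_n$ (with $x_0=x$), using $(\dagger)$ at the current point $x_n$ to keep the target $y$ inside $\overline{U_{n+1}x_n}$ while forcing $x_n\to y$, and shrinking each $a_{n+1}$ enough (depending on the partial product $g_n=a_n\cdots a_1$) to make $(g_n)$ Cauchy for the complete metric. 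Completeness then gives $g=\lim_n g_n\in\overline{U_0}\subseteq U$, continuity of the orbit map gives $gx=\lim_n x_n=y$, and hence $\inte(\overline{U_1x})\subseteq Ux$, so $Ux$ is a neighborhood of $x$. I expect the main obstacle to lie precisely in this recursion: one must choose each $a_{n+1}$ simultaneously \emph{large} enough to improve the approximation of $y$ and \emph{small} enough to control $d^*(g_{n+1},g_n)$, and the crux is preventing the ``admissible radii'' provided by $(\dagger)$ at the successive points $x_n$ from collapsing, so that the approximation can be carried through all stages while the group elements still converge.
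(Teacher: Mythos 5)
Your category half is correct, and it is essentially the standard opening move (it corresponds to Claims 1 and 2 in the proof of Theorem \ref{theorem_main_positive}): Lindel\"ofness and transitivity give that $Vx$ is non-meager, hence $\overline{Vx}$ has nonempty interior, and translating by $v^{-1}$ places $x$ in $\inte(\overline{V^2x})\subseteq\inte(\overline{Ux})$. The gap is in the completeness half, and it is precisely the one you flag yourself without discharging: the one-sided recursion, as set up, cannot be carried out. To keep the invariant ``$y\in\inte(\overline{U_{n+2}x_{n+1}})$'' alive you must choose $a_{n+1}\in U_{n+1}$ so that $x_{n+1}=a_{n+1}x_n$ lands close enough to $y$; but the only lower bound available on the radius of the neighborhood $\inte(\overline{U_{n+2}z})$ of a point $z$ is the one $(\dagger)$ produces \emph{at} $z$, and it depends on $z$. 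So ``close enough'' is determined only after $x_{n+1}$ has been chosen, and unlike in the linear Open Mapping Theorem there is no scaling law (no analogue of $\overline{T(B_r)}\supseteq B_{r\delta}$ for all $r$) to make the bound uniform. Nothing in your argument prevents the admissible radii at the successive points $x_n$ from collapsing faster than $d(x_n,y)$; naming this as ``the main obstacle'' is accurate, but that obstacle is the entire content of the theorem beyond the category argument, so the proof is incomplete at its crux.

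There are two standard ways to close it, and both replace your recursion by a genuinely different device. The first is a \emph{symmetric, two-sided} recursion: approximate a common target from both $x$ and $y$, maintaining a nonempty open set $W_n\subseteq\overline{V_nx_n}\cap\overline{V_ny_n}$; choose $x_{n+1}\in V_nx_n\cap W_n$ \emph{first}, apply $(\dagger)$ at $x_{n+1}$ to obtain an open $O\ni x_{n+1}$ with $O\subseteq\overline{V_{n+1}x_{n+1}}$, and only \emph{then} choose $y_{n+1}\in V_ny_n\cap W_n\cap O$. Choosing the second point inside the neighborhood determined by the first is what breaks the circularity; completeness then yields $g,h\in\overline{U_0}$ with $gx=hy$, hence $y=h^{-1}gx\in Ux$. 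The second way is the route the paper itself takes, which dispenses with completeness entirely: since $G$ is Polish, each $U_nx=\gamma_x[U_n]$ is analytic and therefore has the Baire property, so Theorem \ref{theorem_main_positive} applies; its Claim 3 shows $\inte(\overline{U_{n+1}x})\subseteq U_nx$ in a single step, by intersecting the two nowhere meager sets $U_{n+1}x$ and $U_{n+1}z$ inside $\inte(\overline{U_{n+1}x})\cap\inte(\overline{U_{n+1}z})$ via Corollary \ref{corollary_fat} and writing $z=h^{-1}gx\in U_{n+1}^2x$. Either supplement would turn your outline into a proof; as written, it stops just short of the essential idea.
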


The above theorem was substantially generalized by J. van Mill, who obtained the following result (see \cite[Theorem 1.2]{van_mill_2004}).

\begin{theorem}[van Mill]\label{theorem_van_mill}
Every analytic group is Effros.
\end{theorem}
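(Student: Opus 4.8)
The plan is to establish micro-transitivity directly, which is the same as saying that the orbit maps are open. So let $G$ be an analytic group acting transitively on a non-meager space $X$, fix a point $x_0\in X$ and a neighborhood $U$ of the identity $e$ of $G$; I must show that $Ux_0$ is a neighborhood of $x_0$. Since every $g\in G$ acts on $X$ as a self-homeomorphism and $g(hx_0)=(gh)x_0$, micro-transitivity at the single point $x_0$ propagates to all points, so this suffices. Choose a symmetric open $V\ni e$ with $VV\subseteq U$. Separability of $G$ gives $G=\bigcup_n g_nV$, and transitivity then yields $X=\bigcup_n g_n(Vx_0)$. As $X$ is non-meager and $\DC$ delivers the Baire category theorem, some $g_n(Vx_0)$ is non-meager, and applying the homeomorphism $y\mapsto g_n^{-1}y$ shows that $Vx_0$ itself is non-meager in $X$. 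The same covering argument shows that $Wy$ is non-meager for every nonempty open $W\subseteq G$ and every $y\in X$.

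The one place where analyticity is genuinely used is the following. The set $Vx_0$ is the image of the analytic set $V\subseteq G$ under the continuous map $g\mapsto gx_0$, hence is itself analytic; and in $\ZF+\DC$ every analytic set has the Baire property. Thus $Vx_0$ is non-meager with the $\BP$, so it is comeager in some nonempty open $O\subseteq X$. Being comeager it is dense in $O$, so I may fix $p=w_0x_0\in O$ with $w_0\in V$. Intuitively $O$ is a set on which $Vx_0$ is topologically large, and applying the homeomorphism $y\mapsto w_0^{-1}y$ relocates this largeness to $x_0$; but largeness in the sense of category must still be converted into genuine interior.

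That conversion is the technical heart of the proof, carried out by a Pettis-style doubling. Since $y\mapsto w_0y$ is continuous and $w_0x_0=p\in O$, the set $N'=\{y\in X:w_0y\in O\}=w_0^{-1}O$ is an open neighborhood of $x_0$; I claim $N'\subseteq Ux_0$. Fix $y\in N'$. By continuity of the action at $w_0$ and of multiplication at $w_0$, there is a symmetric open $V''\ni e$ with $V''w_0\subseteq V$ and $(V''w_0)y=V''(w_0y)\subseteq O$. By the covering argument of the first paragraph, $(V''w_0)y$ is non-meager, hence a non-meager subset of $O$; since $O\setminus Vx_0$ is meager, it cannot be contained there, so $(V''w_0)y\cap Vx_0\neq\emptyset$. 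This produces $g\in V''w_0\subseteq V$ and $v\in V$ with $gy=vx_0$, whence $y=g^{-1}vx_0\in V^{-1}Vx_0=VVx_0\subseteq Ux_0$, using that $V$ is symmetric. Thus $N'\subseteq Ux_0$ and $Ux_0$ is a neighborhood of $x_0$, as required. The delicate point—and what I expect to require the most care—is exactly this doubling: it is where the \emph{soft} facts (continuity of the action, separability, the Baire category theorem) must be fused with the \emph{hard} hypothesis that $G$ is analytic, in the guise of the Baire property of $Vx_0$. Without the latter the final intersection step breaks down, which is consistent with the existence of non-Effros groups once the $\BP$ of analytic sets is unavailable.
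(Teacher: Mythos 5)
Your argument is correct, and it is essentially the same Pettis/Effros-style category argument that the paper carries out (for the more general Theorem \ref{theorem_main_positive}, whose proof it describes as a minor modification of van Mill's): non-meagerness of all orbit pieces $Wy$ via a Lindel\"of covering argument, the Baire property of the analytic set $Vx_0$, and the intersection of two non-meager translates inside a common open set to land in $V^{-1}Vx_0\subseteq Ux_0$. The only cosmetic difference is that you localize via the Baire-property decomposition of $Vx_0$ and translate by $w_0^{-1}$, rather than using the sequence $(U_n)$ and the sets $\inte(\overline{U_{n+1}x})$; the small points you gloss over (that the Baire property in an ambient Polish space passes to the possibly non-Polish $X$ via a dense embedding, as in Proposition \ref{proposition_PB}, and that ``dense in $O$'' is really the weaker and automatic fact $Vx_0\cap O\neq\varnothing$) are easily repaired.
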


It seems natural to wonder whether the above result is optimal. We will show that this is indeed the case. More precisely, we will show that the property of being an Effros group behaves like the classical regularity properties. In fact, Corollary \ref{corollary_main_PB} shows that every group is Effros under~$\AD$, Corollary \ref{corollary_counterexample_ZFC} shows that this is not the case in $\mathsf{ZFC}$, and Corollary \ref{corollary_counterexample_V=L} exhibits a coanalytic non-Effros group under $\mathsf{V=L}$.

Finally, we remark that generalizations of Theorem \ref{theorem_van_mill} to the non-separable realm do exist (see \cite{ostaszewski_2013} and \cite{ostaszewski_2015}), although the situation is not as pleasant as in the separable case.

\section{Preliminaries and terminology}\label{section_preliminaries}

Given a group $G$ with identity $e$ and a set $X$, a function $\cdot:G\times X\longrightarrow X$ is called a \emph{group action} if the following conditions hold:
\begin{itemize}
\item $e\cdot x=x$ for every $x\in X$,
\item $(gh)\cdot x=g\cdot(h\cdot x)$ for every $g,h\in G$ and $x\in X$.
\end{itemize}
We will often simply write $gx$ instead of $g\cdot x$. Given $S\subseteq G$ and $x\in X$, we will use the notation $Sx=\{gx:g\in S\}$.

For simplicity, we will always assume that every group $G$ is also endowed with a topology which makes it into a topological group, and that every set $X$ on which $G$ acts is a topological space. As we mentioned at the very beginning of the article, all groups and spaces will be assumed to be separable and metrizable. Furthermore, all group actions will be assumed to be continuous.\footnote{\,We remark that, as in \cite{van_mill_2004}, all results contained in this article actually hold for separately continuous actions (that is, those actions such that all functions $x\longmapsto gx$ and $g\longmapsto gx$ are continuous).}

Notice that, for every given $g\in G$, the function $x\longmapsto gx$ is a homeomorphism of $X$ (with inverse $x\longmapsto g^{-1}x$). Given $x\in X$, define $\gamma_x:G\longrightarrow X$ by setting $\gamma_x(g)=gx$ for every $g\in G$. The following simple proposition (see \cite[Lemma 1]{ancel}) gives some useful characterizations of micro-transitivity.
\begin{proposition}\label{proposition_micro-transitivity}
Let $G$ be a group acting transitively on a space $X$. Then the following conditions are equivalent:
\begin{itemize}
\item $G$ acts micro-transitively on $X$,
\item $\gamma_x$ is open for every $x\in X$,	
\item $\gamma_x$ is open for some $x\in X$.
\end{itemize}	
\end{proposition}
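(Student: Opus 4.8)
The plan is to establish the cyclic chain of implications from the first listed condition to the second, from the second to the third, and from the third back to the first; I will refer to these three conditions as (i), (ii), (iii) in the order written. The implication (ii)$\,\Rightarrow\,$(iii) is immediate since $X$ is nonempty. Throughout I will use two facts recorded in the preliminaries: for each fixed $g\in G$ the left translation $\lambda_g\colon x\mapsto gx$ is a homeomorphism of $X$, and in testing micro-transitivity one may assume the neighborhood $U$ of $e$ is open (shrinking $U$ only shrinks $Ux$).

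For (i)$\,\Rightarrow\,$(ii), fix $x\in X$; I must show that $\gamma_x$ carries every open $V\subseteq G$ to an open subset $Vx$ of $X$. Given $y\in Vx$, write $y=gx$ with $g\in V$ and set $U=g^{-1}V$, which is an open neighborhood of $e$ because left translation by $g^{-1}$ is a homeomorphism of $G$. Micro-transitivity applied at $x$ makes $Ux$ a neighborhood of $x$, and since $V=gU=\{gu:u\in U\}$ the action identity $(gu)x=g(ux)$ gives $Vx=\lambda_g(Ux)$. As $\lambda_g$ is a homeomorphism sending $x$ to $y$, the set $Vx$ is a neighborhood of $y$; since $y\in Vx$ was arbitrary, $Vx$ is open.

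For (iii)$\,\Rightarrow\,$(i), the key observation is the conjugacy identity $\gamma_{gx}=\gamma_x\circ\rho_g$, where $\rho_g\colon h\mapsto hg$ is right translation, a homeomorphism of $G$; this holds because $\gamma_{gx}(h)=h(gx)=(hg)x=\gamma_x(\rho_g(h))$. Hence if $\gamma_{x_0}$ is open for some $x_0$, then for any $x\in X$ transitivity yields $g$ with $x=gx_0$, and $\gamma_x=\gamma_{x_0}\circ\rho_g$ is open as a composition of an open map with a homeomorphism. It remains to deduce micro-transitivity from the openness of every $\gamma_x$: for an open neighborhood $U$ of $e$, the set $Ux=\gamma_x(U)$ is open and contains $x=ex$, hence is a neighborhood of $x$, as required.

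The main obstacle is the implication (i)$\,\Rightarrow\,$(ii): micro-transitivity only directly asserts that $Ux$ is a \emph{neighborhood of the basepoint} $x$, whereas openness of $\gamma_x$ demands a neighborhood of \emph{every} point of the image $Vx$. The point-shifting argument above---recentering at $y=gx$ via the homeomorphism $\lambda_g$ and the factorization $Vx=\lambda_g(Ux)$---is precisely what upgrades the pointwise data to genuine openness; the remaining steps are routine.
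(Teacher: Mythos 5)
Your proof is correct: the translation identities $Vx=\lambda_g(Ux)$ and $\gamma_{gx}=\gamma_x\circ\rho_g$ are exactly what is needed, and you correctly handle the two small points (reducing to open neighborhoods of $e$, and transporting openness of $\gamma_{x_0}$ to all $\gamma_x$ via transitivity). The paper itself gives no proof, deferring to Ancel's Lemma 1, and your argument is the standard one used there, so there is nothing further to compare.
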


Our reference for descriptive set theory is \cite{kechris_1995}. In particular, we assume familiarity with the basic theory of Polish spaces, and their Borel and projective subsets. Our reference for other set-theoretic notions is \cite{jech}. Given a set $X$, we will denote by $\PP(X)$ the collection of all subsets of $X$. We will denote by $X^{\leq\omega}$ the collection of all functions $s:n\longrightarrow X$, where $n\leq\omega$. Given a function $f:X\longrightarrow Y$, we will denote by
$$
\Gr(f)=\{(x,f(x)):x\in X\}
$$
the graph of $f$. While $\Gr(f)=f$ from a purely set-theoretic standpoint, we believe that this notation will improve the readability of the article. Observe that, when $G$ and $H$ are groups, and $\varphi:G\longrightarrow H$ is a homomorphism, then $\Gr(\varphi)$ has a natural group structure (in fact, it is a subgroup of $G\times H$).

Given spaces $X$ and $Y$, we will say that $j:X\longrightarrow Y$ is an \emph{embedding} if $j:X\longrightarrow j[X]$ is a homeomorphism. Given $1\leq n<\omega$, we will say that a space $X$ is $\bS^1_n$ (respectively $\bP^1_n$ or $\bD^1_n$) if there exists a Polish space $Z$ and an embedding $j:X\longrightarrow Z$ such that $j[X]\in\bS^1_n(Z)$ (respectively $j[X]\in\bP^1_n(Z)$ or $j[X]\in\bD^1_n(Z)$). It is easy to show that a space $X$ is $\bS^1_n$ (respectively $\bP^1_n$ or $\bD^1_n$) iff $j[X]\in\bS^1_n(Z)$ (respectively $j[X]\in\bP^1_n(Z)$ or $j[X]\in\bD^1_n(Z)$) for every Polish space $Z$ and every embedding $j:X\longrightarrow Z$ (see \cite[Proposition 4.2]{medini_zdomskyy}). A space is \emph{analytic} (respectively \emph{coanalytic} or \emph{Borel}) if it is $\bS^1_1$ (respectively $\bP^1_1$ or $\bD^1_1$).

We conclude with some preliminaries concerning the Baire property. Given a space $X$, we will denote by $\BP(X)$ be the collection of all subsets of $X$ that have the Baire property in $X$. We will denote by $\BP$ the statement that $\BP(\omega^\omega)=\PP(\omega^\omega)$. It is well-known that $\AD$ implies $\BP$ (use the methods of \cite[Section 8.H]{kechris_1995}). In Section \ref{section_positive}, we will need the following simple consequence of $\BP$.

\begin{proposition}\label{proposition_PB}
Assume $\BP$. Then $\BP(X)=\PP(X)$ for every space $X$.
\end{proposition}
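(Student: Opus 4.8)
The plan is to transfer the Baire property from $\omega^\omega$ to an arbitrary space $X$ in stages, at each step crossing a \emph{dense} (or comeager, or closed) inclusion, for which the Baire property is well-behaved. The basic tool is a relativization principle: if $Y$ is a dense subspace of a space $Z$ and $M$ is meager in $Z$, then $M\cap Y$ is meager in $Y$ (to check this, note that for $N$ nowhere dense in $Z$ the closure of $N\cap Y$ in $Y$ lies in $\overline{N}^Z\cap Y$; were some nonempty $V\cap Y$ with $V$ open in $Z$ contained in $\overline{N}^Z$, density of $Y$ would give $V\subseteq\overline{V\cap Y}^Z\subseteq\overline{N}^Z$, contradicting nowhere density of $N$). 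Consequently, if $A\subseteq Y$ has the Baire property when viewed as a subset of $Z$, then it has the Baire property in $Y$. It is essential that $Y$ be dense: for an arbitrary subspace this fails, as a Cantor set $C\subseteq\RRR$ is nowhere dense (hence meager) in $\RRR$, yet $C$ is all of itself. This is exactly why passing to completions is the right move.

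First I would embed $X$ into the Hilbert cube $[0,1]^\omega$ and let $Z$ be the closure of its image, a compact metrizable (hence Polish) space in which a homeomorphic copy of $X$ sits densely. By the principle above, it suffices to prove that every subset of the \emph{Polish} space $Z$ has the Baire property, whence the statement for $X$ follows by restriction.

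Second, I would reduce to the \emph{perfect} Polish case by Cantor--Bendixson: write $Z=P\cup C$, where $P$ is the closed perfect kernel and $C=Z\setminus P$ is countable and open. For $A\subseteq Z$, the piece $A\cap C$ is countable, hence $F_\sigma$, hence has the Baire property in $Z$. For the piece $A\cap P$ I would first obtain the Baire property in $P$ (below) and then push it up to $Z$: since $P$ is closed, a set meager in $P$ is meager in $Z$, and $U\cap P$ is Borel in $Z$ for every open $U$, so a representation of $A\cap P$ as (open in $P$)$\,\triangle\,$(meager in $P$) becomes a representation as (Borel in $Z$)$\,\triangle\,$(meager in $Z$).

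Finally, for a perfect Polish space $P$ I would invoke the topological fact that $P$ contains a dense $G_\delta$ subspace $P_0$ homeomorphic to $\omega^\omega$ (build an $\omega$-branching Lusin scheme of open sets with pairwise disjoint closures, shrinking diameters, and unions dense at each node; the resulting dense $G_\delta$ is nonempty, Polish, zero-dimensional and nowhere locally compact, so the Alexandrov--Urysohn characterization identifies it with $\omega^\omega$). Given $A\subseteq P$, the trace $A\cap P_0$ corresponds under the homeomorphism to a subset of $\omega^\omega$, which has the Baire property by $\BP$; since $P_0$ is comeager in $P$, the relativization principle (in both directions, which is routine for a dense $G_\delta$) shows $A\cap P_0$ has the Baire property in $P$. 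As $A\setminus P_0\subseteq P\setminus P_0$ is meager, it too has the Baire property, and hence so does $A=(A\cap P_0)\cup(A\setminus P_0)$. The main obstacle is bookkeeping the relativizations correctly—meagerness descends only to dense (equivalently comeager, or suitably closed) subspaces—together with the one genuinely topological input, the existence of a dense copy of $\omega^\omega$ inside every perfect Polish space.
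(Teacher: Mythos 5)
Your argument is correct, and the outer reduction (embed $X$ densely into a Polish space $Z$ and observe that a set with the Baire property in $Z$ has it in any dense subspace) is exactly the one in the paper. Where you genuinely diverge is in handling the Polish case itself. The paper quotes the fact that every non-empty Polish space is a continuous \emph{open} image of $\omega^\omega$ and pulls the Baire property back along that surjection; the suppressed step there is that for a continuous open surjection $f$, $f^{-1}(A)\in\BP(\omega^\omega)$ implies $A\in\BP(Z)$, which requires the standard localization argument. You instead go forward: Cantor--Bendixson splits off a countable (hence $F_\sigma$, hence harmless) part, and the perfect kernel contains a dense $G_\delta$ copy of $\omega^\omega$, so $\BP$ applies directly to the trace and the comeagerness of that copy transports the Baire property upward, with the leftover piece being meager. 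Both routes reduce to $\omega^\omega$; yours trades the open-surjection reflection lemma for the Cantor--Bendixson decomposition plus the dense-$G_\delta$-copy-of-$\omega^\omega$ fact (and the closed-subspace transfer for the perfect kernel). Your version is somewhat longer but every relativization step is elementary and checked by hand, whereas the paper's is a one-liner resting on two quotable facts from Kechris. All the individual transfers you use (meagerness descends to dense subspaces, ascends from closed and from dense subspaces, open-in-subspace is Borel in the ambient space) are correct as stated.
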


\begin{proof}
By \cite[Exercise 7.14]{kechris_1995}, for every non-empty Polish space $X$ there exists a continuous open surjection $f:\omega^\omega\longrightarrow X$. Using this fact, it is easy to see that the desired result holds when $X$ is Polish. The general case follows from the fact that for every space $X$ there exists a Polish space $Z$ and an embedding $j:X\longrightarrow Z$ such that $j[X]$ is dense in $Z$, since $\BP(Z)\cap\PP(j[X])\subseteq\BP(j[X])$ by density.
\end{proof}

\section{Revisiting some classics and non-Effros groups in $\mathsf{ZFC}$}\label{section_classics}

We begin by showing how the Effros theorem easily implies two classical theorems. More precisely, in the separable context, Corollary \ref{corollary_closed_graph} considerably strengthens the Closed Graph Theorem, while Corollary \ref{corollary_open_mapping} considerably strengthens the Open Mapping Theorem. In particular, they show that the linear structure is irrelevant in this context. While these facts seem to be somewhat folklore, the concept of an Effros group allows for especially general and elegant statements.

While Corollary \ref{corollary_closed_graph}, Theorem \ref{theorem_open_mapping} and Corollary \ref{corollary_open_mapping} seem to be of independent interest, they will not play much of a role in the remainder of the paper. On the other hand, Theorem \ref{theorem_closed_graph} is the crucial tool that will allow us to obtain examples of non-Effros groups (see Corollaries \ref{corollary_counterexample_ZFC} and \ref{corollary_counterexample_V=L}).

\begin{theorem}\label{theorem_closed_graph}
Let $G$ and $H$ be groups, and let $\varphi:G\longrightarrow H$ be a homomorphism. If $\Gr(\varphi)$ is an Effros group and $G$ is non-meager then $\varphi$ is continuous.
\end{theorem}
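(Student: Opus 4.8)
The plan is to let $\Gr(\varphi)$ act on $G$ by left translation through the first-coordinate projection, to verify that this is a continuous transitive action on the non-meager space $G$, to invoke the Effros hypothesis to obtain micro-transitivity, and finally to read off the continuity of $\varphi$ from the resulting openness of the projection.

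More precisely, I would first define $\cdot:\Gr(\varphi)\times G\longrightarrow G$ by setting $(g,\varphi(g))\cdot x=gx$. The group axioms in $G$ make this a group action, and it is transitive because, given $x,y\in G$, the element $(yx^{-1},\varphi(yx^{-1}))\in\Gr(\varphi)$ sends $x$ to $y$. Continuity follows by factoring the action as the restriction of the coordinate projection $G\times H\longrightarrow G$ (applied to the $\Gr(\varphi)$ factor) followed by the multiplication map of $G$, both of which are continuous.

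Since $G$ is non-meager by hypothesis and $\Gr(\varphi)$ is assumed to be Effros, the action just described is micro-transitive. By Proposition \ref{proposition_micro-transitivity}, this means that $\gamma_e$ is open, where $e$ denotes the identity of $G$. But $\gamma_e(g,\varphi(g))=ge=g$, so $\gamma_e$ is precisely the first-coordinate projection $\pi:\Gr(\varphi)\longrightarrow G$. Therefore $\pi$ is a continuous open bijection, hence a homeomorphism, and its inverse $\pi^{-1}:G\longrightarrow\Gr(\varphi)$, given by $g\longmapsto(g,\varphi(g))$, is continuous.

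Composing $\pi^{-1}$ with the (continuous) second-coordinate projection $\Gr(\varphi)\longrightarrow H$ yields exactly $\varphi$, so $\varphi$ is continuous. I expect no genuine obstacle in this argument: everything hinges on selecting the correct action, and the only steps requiring a moment's care are the identification of $\gamma_e$ with the projection $\pi$ and the standard observation that a continuous open bijective homomorphism is a homeomorphism.
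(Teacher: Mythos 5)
Your proposal is correct and follows essentially the same route as the paper: the same action $(g,\varphi(g))\cdot x=gx$ of $\Gr(\varphi)$ on $G$, the same appeal to Proposition \ref{proposition_micro-transitivity} to get that $\gamma_e$ is open, and the same recovery of $\varphi$ as the second-coordinate projection composed with $\gamma_e^{-1}$. The only difference is that you spell out the routine verifications (transitivity, continuity of the action, the identification of $\gamma_e$ with the first-coordinate projection) that the paper leaves implicit.
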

\begin{proof}
Assume that $\Gr(\varphi)$ is an Effros group and that $G$ is non-meager. Consider the action $\cdot$ of $\Gr(\varphi)$ on $G$ obtained by setting
$$
(g,\varphi(g))\cdot x=gx
$$
for every $g,x\in G$. Obviously, the action $\cdot$ is transitive. Since $\Gr(\varphi)$ is Effros and $G$ is non-meager, it follows that $\cdot$ is micro-transitive. Therefore, by Proposition \ref{proposition_micro-transitivity}, the bijection $\gamma_e:\Gr(\varphi)\longrightarrow G$ associated to $\cdot$ is open, where $e$ denotes the identity of $G$. This means that $\gamma_e^{-1}$ is continuous, hence so is $\varphi=\pi\circ\gamma_e^{-1}$, where $\pi:G\times H\longrightarrow H$ denotes the natural projection.
\end{proof}

In the proof of the following corollary, for the sake of concreteness, we will describe a specific discontinuous group homomorphism. For several other suitable examples, see \cite[Section 1]{rosendal}.

\begin{corollary}\label{corollary_counterexample_ZFC}
Assume $\AC$. Then there exists a non-Effros group.\footnote{\,It is easy to verify that the example given here is meager. We remark that every non-principal ultrafilter $\UU$ on $\omega$ with its natural group structure (see \cite[Section 3]{medini}) gives a Baire example of non-Effros group. To see this, observe that the characteristic function $\chi:2^\omega\longrightarrow 2$ of $\UU$ is a discontinuous group homomorphism, and that $\Gr(\chi)$ is isomorphic to $\UU\times 2$ as a topological group. Finally, using the methods of \cite[Proposition 13.6]{medini}, one can show that $\UU\times 2$ is isomorphic to $\UU$ as a topological group.}
\end{corollary}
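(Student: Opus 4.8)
The plan is to invoke the contrapositive of Theorem \ref{theorem_closed_graph}: to obtain a non-Effros group it suffices to exhibit a discontinuous homomorphism $\varphi:G\longrightarrow H$ between separable metrizable groups with $G$ non-meager, since then $\Gr(\varphi)$ cannot be Effros. The role of $\AC$ is precisely to manufacture such a discontinuous homomorphism; once it is in hand, everything else is automatic. For the non-meagerness of the domain, I would simply take $G$ to be Polish, as every completely metrizable space is non-meager in itself by the Baire Category Theorem.

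For concreteness I would set $G=\RRR$, with its usual additive topological group structure, and $H=\QQQ$. Regarding $\RRR$ as a vector space over $\QQQ$, I would use $\AC$ to fix a Hamel basis $B$, pick some $b_0\in B$, and define $\varphi:\RRR\longrightarrow\QQQ$ by letting $\varphi(x)$ be the coefficient of $b_0$ in the unique finitely supported expansion of $x$ with respect to $B$. This $\varphi$ is manifestly a surjective (hence nonzero) group homomorphism.

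The one point requiring an argument is that $\varphi$ is discontinuous, and here I would appeal to connectedness rather than to any measurability or Baire-property consideration: since $\RRR$ is connected while $\QQQ$ is totally disconnected, the image of any continuous map $\RRR\longrightarrow\QQQ$ is a singleton, so the only continuous homomorphism $\RRR\longrightarrow\QQQ$ is the trivial one. As $\varphi$ is nonzero, it cannot be continuous.

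It then follows from Theorem \ref{theorem_closed_graph} that $\Gr(\varphi)\subseteq\RRR\times\QQQ$, viewed as a subgroup, is not an Effros group. There is no real obstacle in this argument beyond the (unavoidable) appeal to $\AC$ in choosing the Hamel basis: by Theorem \ref{theorem_van_mill} no such counterexample can be analytic, and indeed Corollary \ref{corollary_main_PB} will show that under $\AD$ no non-Effros group exists at all, so some strong choice principle is genuinely needed here.
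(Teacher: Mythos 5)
Your proposal is correct and follows essentially the same route as the paper: use $\AC$ to fix a Hamel basis of $\RRR$ over $\QQQ$, take the coordinate functional of a chosen basis element, and apply Theorem \ref{theorem_closed_graph} to its graph, noting that $\RRR$ is non-meager. The only variation is in verifying discontinuity: the paper picks basis elements $h_n\to h_\infty$ with $h_n\neq h_\infty$ (possible since the basis is uncountable) and observes $\varphi(h_n)=0\not\to 1=\varphi(h_\infty)$, whereas your connectedness argument (any continuous homomorphism from the connected space $\RRR$ to the totally disconnected space $\QQQ$ is trivial, while your $\varphi$ is nonzero) is an equally valid and, if anything, slightly cleaner way to see the same thing.
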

\begin{proof}
Using $\AC$, we can fix a basis $\HH$ for $\RRR$ as a vector space over $\QQQ$. Since $\HH$ is uncountable, we can pick $h_\infty\in\HH$ and $h_n\in\HH\setminus\{h_\infty\}$ for $n\in\omega$ such that $h_n\to h_\infty$. Let $\varphi:\RRR\longrightarrow\QQQ$ be the unique linear functional such that
$$
\left.
\begin{array}{lcl}
& & \varphi(h)= \left\{
\begin{array}{ll}
1 & \text{if }h=h_\infty,\\
0 & \text{if }h\neq h_\infty
\end{array}
\right.
\end{array}
\right.
$$
for every $h\in\HH$. Notice that $\varphi$ is discontinuous as $\varphi(h_n)=0\not\to 1=\varphi(h_\infty)$. It follows from Theorem \ref{theorem_closed_graph} that $\Gr(\varphi)$ is a non-Effros group.
\end{proof}

\begin{corollary}\label{corollary_closed_graph}
Let $G$ and $H$ be groups, and let $\varphi:G\longrightarrow H$ be a homomorphism. If $\Gr(\varphi)$ is analytic and $G$ is non-meager then $\varphi$ is continuous.\footnote{\,This corollary can also be derived from \cite[Theorem 9.10]{kechris_1995}. The same remark holds for Corollary~\ref{corollary_closed_graph_PB}.}
\end{corollary}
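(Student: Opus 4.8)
The plan is to deduce this immediately from the two theorems already at our disposal, namely van Mill's Theorem \ref{theorem_van_mill} and the general closed-graph-type result Theorem \ref{theorem_closed_graph}. The only thing that needs to be set up is the observation, already recorded in Section \ref{section_preliminaries}, that $\Gr(\varphi)$ carries a natural group structure: since $\varphi:G\longrightarrow H$ is a homomorphism, $\Gr(\varphi)$ is a subgroup of $G\times H$, and as a subspace of the separable metrizable topological group $G\times H$ it is itself a separable metrizable topological group. Thus $\Gr(\varphi)$ is a ``group'' in the sense of this paper, and it makes sense to ask whether it is Effros.

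Granting this, the argument is a short chain of implications. First I would note that $\Gr(\varphi)$ is analytic by hypothesis, so Theorem \ref{theorem_van_mill} applies and tells us that $\Gr(\varphi)$ is an Effros group. Then, since we are also given that $G$ is non-meager, I would simply invoke Theorem \ref{theorem_closed_graph} with this same homomorphism $\varphi$: its hypotheses (that $\Gr(\varphi)$ is Effros and that $G$ is non-meager) are now both met, and its conclusion is precisely that $\varphi$ is continuous.

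There is essentially no obstacle here, since all the substantive work is packaged into the two cited theorems; the corollary is really just the special case of Theorem \ref{theorem_closed_graph} in which the Effros hypothesis on $\Gr(\varphi)$ is supplied automatically by analyticity via van Mill's theorem. The only point one must be slightly careful about is the implicit identification used above, namely that the abstract group $\Gr(\varphi)$ and its topology as a subgroup of $G\times H$ are the right objects to which van Mill's theorem and Theorem \ref{theorem_closed_graph} apply; but this is immediate from the definitions and from the remark in Section \ref{section_preliminaries}.
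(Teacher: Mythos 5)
Your proposal is correct and is exactly the paper's argument: the paper's proof reads ``Combine Theorems \ref{theorem_closed_graph} and \ref{theorem_van_mill},'' which is precisely the chain you describe (analyticity gives Effros via van Mill, then Theorem \ref{theorem_closed_graph} gives continuity). Nothing further is needed.
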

\begin{proof}
Combine Theorems \ref{theorem_closed_graph} and \ref{theorem_van_mill}.
\end{proof}

\begin{theorem}\label{theorem_open_mapping}
Let $G$ and $H$ be groups, and let $\varphi:G\longrightarrow H$ be a surjective continuous homomorphism. If $G$ is Effros and $H$ is non-meager then $\varphi$ is open.	
\end{theorem}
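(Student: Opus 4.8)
The plan is to mimic the proof of Theorem \ref{theorem_closed_graph}, but now letting $G$ itself (rather than $\Gr(\varphi)$) act, this time on $H$. The key is to cook up a transitive continuous action of $G$ on $H$ whose associated map at the identity of $H$ is precisely $\varphi$; then the Effros hypothesis, together with Proposition \ref{proposition_micro-transitivity}, will immediately deliver the openness of $\varphi$.

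Concretely, I would define an action $\cdot$ of $G$ on $H$ by setting
$$
g\cdot y=\varphi(g)y
$$
for every $g\in G$ and $y\in H$. A routine check shows that this is a group action: the identity of $G$ acts trivially because $\varphi$ is a homomorphism, and associativity $(gh)\cdot y=g\cdot(h\cdot y)$ follows from $\varphi(gh)=\varphi(g)\varphi(h)$ together with associativity of multiplication in $H$. Continuity of $\cdot$ follows from the continuity of $\varphi$ and of the multiplication in $H$.

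Next I would verify transitivity. Given $y,z\in H$, surjectivity of $\varphi$ lets us pick $g\in G$ with $\varphi(g)=zy^{-1}$, and then $g\cdot y=\varphi(g)y=z$. Since $G$ is Effros and $H$ is non-meager, it follows that $\cdot$ is micro-transitive. By Proposition \ref{proposition_micro-transitivity}, the map $\gamma_e:G\longrightarrow H$ associated to $\cdot$ is open, where $e$ denotes the identity of $H$. But $\gamma_e(g)=g\cdot e=\varphi(g)e=\varphi(g)$ for every $g\in G$, so $\gamma_e=\varphi$, and therefore $\varphi$ is open.

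I expect no serious obstacle here, as the argument is essentially dual to that of Theorem \ref{theorem_closed_graph}: the only points requiring (easy) attention are the verification of the group action axioms and the use of surjectivity of $\varphi$ to obtain transitivity of $\cdot$.
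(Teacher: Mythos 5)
Your proposal is correct and follows exactly the paper's own argument: define the action $g\cdot h=\varphi(g)h$ of $G$ on $H$, note transitivity from surjectivity, invoke the Effros property together with Proposition \ref{proposition_micro-transitivity}, and observe that $\gamma_e=\varphi$. The extra verifications you include (action axioms, continuity) are routine and left implicit in the paper.
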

\begin{proof}
Consider the action $\cdot$ of $G$ on $H$ defined by setting $g\cdot h=\varphi(g)h$, and notice that $\cdot$ is transitive because $\varphi$ is surjective. Since $G$ is Effros and $H$ is non-meager, it follows that $\cdot$ is micro-transitive. Therefore $\gamma_e$ is open by Proposition \ref{proposition_micro-transitivity}, where $e$ denotes the identity of $H$. The proof is concluded by observing that $\gamma_e=\varphi$.
\end{proof}

\begin{corollary}\label{corollary_open_mapping}
Let $G$ and $H$ be groups, and let $\varphi:G\longrightarrow H$ be a surjective continuous homomorphism. If $G$ is analytic and $H$ is non-meager then $\varphi$ is open.	
\end{corollary}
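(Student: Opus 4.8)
The plan is to recognize that this corollary stands to Theorem \ref{theorem_open_mapping} exactly as Corollary \ref{corollary_closed_graph} stands to Theorem \ref{theorem_closed_graph}: it is obtained by feeding van Mill's result into the abstract open-mapping statement. Comparing the hypotheses of Theorem \ref{theorem_open_mapping} with those available here, I see that everything matches verbatim --- $\varphi$ is a surjective continuous homomorphism and $H$ is non-meager --- except that Theorem \ref{theorem_open_mapping} requires $G$ to be \emph{Effros}, whereas the corollary only assumes $G$ to be \emph{analytic}. So the entire content of the proof is to bridge precisely this one gap.

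First I would invoke Theorem \ref{theorem_van_mill} to upgrade the hypothesis ``$G$ is analytic'' to ``$G$ is Effros''. With $G$ now known to be Effros, I would then apply Theorem \ref{theorem_open_mapping} directly to the surjective continuous homomorphism $\varphi:G\longrightarrow H$ together with the non-meagerness of $H$, concluding that $\varphi$ is open. I do not expect any genuine obstacle: the argument is a routine two-line combination of the two cited results, and the only thing to verify is that the hypotheses line up, which they do without any adjustment. Accordingly, the write-up can simply read ``Combine Theorems \ref{theorem_open_mapping} and \ref{theorem_van_mill}.''
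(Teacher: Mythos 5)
Your proposal is correct and coincides exactly with the paper's proof, which reads ``Combine Theorems \ref{theorem_open_mapping} and \ref{theorem_van_mill}.'' Using Theorem \ref{theorem_van_mill} to upgrade ``analytic'' to ``Effros'' and then applying Theorem \ref{theorem_open_mapping} is precisely the intended argument.
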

\begin{proof}
Combine Theorems \ref{theorem_open_mapping} and \ref{theorem_van_mill}.
\end{proof}

We remark that the assumption ``non-meager'' cannot be dropped in any of Theorem \ref{theorem_closed_graph}, Corollary \ref{corollary_closed_graph}, Theorem \ref{theorem_open_mapping} or Corollary \ref{corollary_open_mapping}. To see this, let $\QQQ_d$ denote the rational numbers with the discrete topology, and consider the identity function $\varphi:\QQQ\longrightarrow\QQQ_d$. It is easy to realize that $\varphi$ gives the desired counterexample in the case of the first two results, while $\varphi^{-1}$ gives the desired counterexample in the case of the last two. These examples are inspired by \cite[Remark (4)]{van_mill_2004}.

\section{Positive results}\label{section_positive}

The main result of this section is Corollary \ref{corollary_main_PB}, which shows that, when the set-theoretic universe is sufficiently regular, no assumption at all on the complexity of the group is needed in Theorems \ref{theorem_effros} and \ref{theorem_van_mill}. This will follow from Theorem \ref{theorem_main_positive}, whose proof is a minor modification of van Mill's proof of Theorem \ref{theorem_van_mill} from \cite{van_mill_2004}.

In the earlier part of the paper we preferred to cite $\AD$, since we regard it as a more ``quotable'' axiom. However, here we will be more precise, and point out that $\BP$ (that is, the assumption that all sets of reals have the Baire property) is in fact sufficient to obtain all the results that we are interested in (see Corollaries \ref{corollary_main_PB}, \ref{corollary_closed_graph_PB}, and \ref{corollary_open_mapping_PB}).

Before stating the following preliminary results, we clarify some terminology, as ours differs from van Mill's. We will say that a subset $A$ of a space $Z$ is \emph{nowhere meager} if $A\cap U$ is non-meager in $Z$ for every non-empty open subset $U$ of $Z$.\footnote{\,Such sets are called \emph{fat} by van Mill (in fact, Corollary \ref{corollary_fat} is the analogue of \cite[Proposition 2.2]{van_mill_2004} in our context). On the other hand, he says that $A$ is \emph{nowhere meager} if every non-empty open subset of $A$ is non-meager in $Z$. It seems almost blasphemous to disagree with van Mill, but we find our choice of terminology more natural.}

\begin{proposition}\label{proposition_fat}
Let $Z$ be a space and let $A\in\BP(Z)$. If $A$ is nowhere meager then $A$ is comeager.	
\end{proposition}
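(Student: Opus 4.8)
The plan is to use the standard characterization of the Baire property together with the nowhere meager hypothesis to pin down the ``open part'' of $A$ as a dense open set. Recall that $A\in\BP(Z)$ means precisely that there exists an open set $U\subseteq Z$ such that the symmetric difference $A\triangle U$ is meager in $Z$. I would fix such a $U$ at the outset, and note immediately that both $A\setminus U$ and $U\setminus A$ are meager, being subsets of $A\triangle U$.

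The key step --- and the only place where the hypothesis is genuinely used --- is to show that $U$ is dense in $Z$. Suppose it were not. Then $V=Z\setminus\overline{U}$ would be a non-empty open set disjoint from $U$, so that $A\cap V=(A\setminus U)\cap V\subseteq A\triangle U$ would be meager. This contradicts the assumption that $A$ is nowhere meager (applied to the non-empty open set $V$). Hence $U$ is dense.

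Once density is established, the rest is routine. Since $U$ is open and dense, its complement $Z\setminus U$ is a closed set with empty interior, hence nowhere dense and in particular meager. Finally, observing that $Z\setminus A\subseteq(Z\setminus U)\cup(U\setminus A)$ exhibits $Z\setminus A$ as a union of two meager sets, so $Z\setminus A$ is meager; that is, $A$ is comeager, as desired.

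I expect the density argument to be the crux: it is short, but it is exactly where nowhere meagerness is converted into a statement about the open set $U$. The manipulations with symmetric differences are purely formal, and the fact that a dense open set has meager complement holds in any topological space, so no special properties of $Z$ (beyond those already in force) are needed.
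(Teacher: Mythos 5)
Your proof is correct, and it is in substance the same argument the paper relies on: the paper simply cites the dual form of the localization property for sets with the Baire property (\cite[Proposition 8.26]{kechris_1995}), and what you have written is a self-contained proof of exactly that fact. Fixing an open $U$ with $A\triangle U$ meager, your density step is the crux and is sound: if $V=Z\setminus\overline{U}$ were non-empty, then $A\cap V\subseteq A\setminus U\subseteq A\triangle U$ would be meager, contradicting nowhere meagerness at $V$. The decomposition $Z\setminus A\subseteq(Z\setminus U)\cup(U\setminus A)$ then finishes the job, since a dense open set has nowhere dense complement. The only thing your write-up buys beyond the paper's one-line citation is that it is self-contained and makes visible that nothing about $Z$ beyond being a topological space is used; there is no gap.
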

\begin{proof}
This is simply the dual version of \cite[Proposition 8.26]{kechris_1995}.
\end{proof}

\begin{corollary}\label{corollary_fat}
Let $Z$ be a non-meager space and let $A,B\in\BP(Z)$. If $A$ and $B$ are nowhere meager then $A\cap B\neq\varnothing$.	
\end{corollary}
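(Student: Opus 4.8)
The plan is to deduce this immediately from Proposition \ref{proposition_fat}. Since both $A$ and $B$ lie in $\BP(Z)$ and are nowhere meager by hypothesis, Proposition \ref{proposition_fat} applies to each of them separately, yielding that $A$ is comeager in $Z$ and that $B$ is comeager in $Z$. So the first step is simply to invoke that proposition twice.

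Next I would combine these two facts. Being comeager means that $Z\setminus A$ and $Z\setminus B$ are both meager in $Z$. Because the meager sets of $Z$ form an ideal (in particular, the union of two meager sets is meager), the set $(Z\setminus A)\cup(Z\setminus B)=Z\setminus(A\cap B)$ is meager, which is to say that $A\cap B$ is comeager in $Z$.

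Finally I would derive the conclusion by contradiction using the non-meagerness of $Z$. If $A\cap B=\varnothing$, then $Z\setminus(A\cap B)=Z$, so the previous step would say that $Z$ is meager in itself, contradicting the hypothesis that $Z$ is non-meager. Hence $A\cap B\neq\varnothing$, as desired.

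I do not anticipate any genuine obstacle here: the entire content is packaged into Proposition \ref{proposition_fat}, and the only supplementary observations are the closure of the meager ideal under finite unions and the trivial fact that a non-meager space has no comeager subset that is empty. The one point worth stating explicitly, to keep the argument clean, is that comeagerness and meagerness are always understood relative to $Z$ throughout, so that the ideal property and the final contradiction are applied in the correct ambient space.
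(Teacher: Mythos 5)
Your argument is correct and is precisely the intended deduction: the paper leaves Corollary \ref{corollary_fat} without an explicit proof because it follows from Proposition \ref{proposition_fat} exactly as you describe (both sets are comeager, their intersection is comeager, and a comeager subset of a non-meager space is non-empty). Nothing is missing.
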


\begin{theorem}\label{theorem_main_positive}
Let $G$ be a group that acts transitively on a non-meager space $X$. Assume that $Ux\in\BP(X)$ for every open subset $U$ of $G$ and every $x\in X$. Then the action is micro-transitive.
\end{theorem}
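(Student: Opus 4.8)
The plan is to prove directly that for every $x\in X$ and every open neighborhood $V$ of the identity $e$ of $G$, the set $Vx$ is a neighborhood of $x$; this is precisely micro-transitivity. Using continuity of the group operations, I would first fix a symmetric open neighborhood $U$ of $e$ with $U^3\subseteq V$. The whole argument then rests on the elementary reformulation, valid because $U$ is symmetric, that for $y\in X$ one has $y\in U^2x$ if and only if $Ux\cap Uy\neq\varnothing$; so the target becomes locating an open set inside $U^2x$.

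The first ingredient is that $Uz$ is non-meager in $X$ for every $z\in X$. Since $G$ is separable metrizable it is Lindel\"of, so the open cover $\{gU:g\in G\}$ has a countable subcover $\{g_nU:n\in\omega\}$; by transitivity $X=Gz=\bigcup_n g_n(Uz)$, and as $X$ is non-meager some $g_n(Uz)$ is non-meager, whence $Uz$ is non-meager because $w\mapsto g_nw$ is a homeomorphism of $X$. The second ingredient uses the hypothesis: since $Ux\in\BP(X)$ is non-meager, it is comeager in some non-empty open $W\subseteq X$ (write $Ux=O\,\triangle\,M$ with $O$ open and $M$ meager; then $O\neq\varnothing$, moreover $O$ is non-meager, and $Ux$ is comeager in $W:=O$).

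Next I would show $W\subseteq U^2x$. Fix $y\in W$. Since the action is (separately) continuous, $u\mapsto uy$ is continuous and sends $e$ to $y\in W$, so there is an open $U_2$ with $e\in U_2\subseteq U$ and $U_2y\subseteq W$. By the first ingredient $U_2y$ is non-meager, and it lies in $W$, in which $Ux$ is comeager; hence $U_2y\cap Ux\neq\varnothing$, since otherwise $U_2y\subseteq W\setminus Ux$ would be meager. A point of $U_2y\cap Ux$ lies in $Uy\cap Ux$, so $y\in U^2x$ by the reformulation. (Here Corollary \ref{corollary_fat} could be invoked in place of this elementary intersection, after promoting the relevant sets to nowhere meager ones via Proposition \ref{proposition_fat}.)

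Finally I would transport $W$ to a neighborhood of $x$. Because $W$ is non-meager and $Ux$ is comeager in $W$, we may pick $u_0\in U$ with $u_0x\in W$; then $x\in u_0^{-1}W$, and $u_0^{-1}W\subseteq u_0^{-1}(U^2x)\subseteq U^3x\subseteq Vx$ because $u_0^{-1}\in U$. As $u_0^{-1}W$ is open, $Vx$ is a neighborhood of $x$, which finishes the argument. The step I expect to be most delicate is this last passage from category information (``$Ux$ is comeager on the open set $W$'') to a genuine neighborhood of the \emph{prescribed} point $x$: the Baire-category input only locates $Ux$ somewhere in $X$, and it is the group structure --- through the symmetric-neighborhood bookkeeping and the translation by $u_0$ --- that pins the resulting open set around $x$ itself. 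Keeping the powers of $U$ aligned, so that the translated set still sits inside $Vx$, is the main point to watch.
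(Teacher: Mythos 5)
Your proof is correct, but it is organized differently from the paper's. The paper follows van Mill's scheme: a chain $U_{n+1}^2\subseteq U_n$ of symmetric neighborhoods and three claims about the sets $\inte(\overline{U_{n+1}x})$, with the key step (Claim 3) obtained by applying Corollary \ref{corollary_fat} to two nowhere meager translates inside a non-meager open set, and with the passage back to the prescribed point $x$ absorbed into Claim 2 ($x\in\inte(\overline{U_{n+1}x})$). You instead work with a single symmetric $U$ satisfying $U^3\subseteq V$, extract an open set $W$ directly from the Baire-property decomposition $Ux=O\,\triangle\,M$ rather than from a closure, prove $W\subseteq U^2x$ by the Pettis-style intersection argument, and only at the end translate $W$ onto $x$ by an explicit $u_0\in U$. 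The two proofs rest on the same two pillars --- the Lindel\"of/transitivity argument showing every translate $U'z$ of a neighborhood of the identity is non-meager, and the Baire-property hypothesis forcing two such translates to meet --- but yours dispenses with closures and with the infinite sequence $U_n$, yielding a shorter, more self-contained argument (essentially the classical Pettis--Banach category proof adapted to actions), whereas the paper's formulation lets it quote van Mill's lemmas verbatim. One small point of hygiene: your ``first ingredient'' is stated only for the fixed $U$, but you later invoke it for the smaller neighborhood $U_2$; since the Lindel\"of argument works verbatim for any open neighborhood of the identity, you should state it in that generality. With that adjustment the argument is complete.
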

\begin{proof}
Let $e$ be the identity of $G$. As in \cite[Section 3]{van_mill_2004}, fix open neighborhoods $U_n$ of $e$ for $n\in\omega$ satisfying the following conditions for each $n$:
\begin{itemize}
\item $U_n^{-1}=U_n$,
\item $U_{n+1}\subseteq U_{n+1}^2\subseteq U_n$.
\end{itemize}

The following two claims correspond to \cite[Corollary 3.3 and Lemma 3.4]{van_mill_2004} respectively, and can be proved in the same way. Given $S\subseteq X$, we will denote by $\overline{S}$ the closure of $S$ in $X$, and we will denote by $\inte(S)$ the interior of $S$ in $X$.

\noindent\textbf{Claim 1.} Let $x\in X$ and $n\in\omega$. If $V$ is open in $X$ and $V\cap U_n x\neq\varnothing$ then $V\cap U_n x$ is non-meager in $X$.

\noindent\textbf{Claim 2.} Let $x\in X$ and $n\in\omega$. Then $x\in \inte(\overline{U_{n+1}x})$ and $\inte(\overline{U_{n+1}x})$ is dense in $\overline{U_{n+1}x}$.

\noindent\textbf{Claim 3.} Let $x\in X$ and $n\in\omega$. Then $\inte(\overline{U_{n+1}x})\subseteq U_n x$.

\noindent\textit{Proof of Claim 3.} Set $V=\inte(\overline{U_{n+1}x})$, and pick $z\in V$. Set $W=\inte(\overline{U_{n+1}z})$. Set $Z=V\cap W$, and observe that $Z$ is an open neighborhood of $z$ by Claim~2. In particular, $Z$ is non-meager, otherwise one could use the transitive action of $G$ on $X$ to contradict the assumption that $X$ is non-meager. Set $A=Z\cap U_{n+1}x$ and $B=Z\cap U_{n+1}z$. Observe that $A$ is dense in $Z$ because $Z\subseteq\overline{U_{n+1}x}$. Similarly, one sees that $B$ is dense in $Z$. Furthermore, since $U_{n+1}x,U_{n+1}z\in\BP(X)$ by assumption and $Z$ is open in $X$, it is easy to check that $A,B\in\BP(Z)$.

Pick a non-empty open subset $U$ of $Z$. Since $U$ is also open in $X$ and $A$ is dense in $Z$, Claim 1 shows that $U\cap A=U\cap U_{n+1}x$ is non-meager in~$X$, hence non-meager in $Z$. Similarly, one sees that $U\cap B=U\cap U_{n+1}z$ is non-meager in $Z$. In conclusion, both $A$ and $B$ are nowhere meager in~$Z$. It follows from Corollary \ref{corollary_fat} that $A\cap B\neq\varnothing$, so it is possible to pick $y\in A\cap B$. This means that there exist $g,h\in U_{n+1}$ such that $gx=y=hz$. Therefore
$$
z=h^{-1}gx\in U_{n+1}^{-1}U_{n+1}x=U_{n+1}^2x\subseteq U_n x,
$$
which concludes the proof of the claim. $\blacksquare$

Finally, in order to show that the action is micro-transitive, pick $x\in X$ and an open neighborhood $U$ of $e$. Fix $n\in\omega$ large enough so that $U_n\subseteq U$. Using Claims 2 and 3, one sees that
$$
x\in\inte(\overline{U_{n+1}x})\subseteq U_n x\subseteq Ux,
$$
which shows that $Ux$ is a neighborhood of $x$.
\end{proof}

\begin{corollary}\label{corollary_main_PB}
Assume $\BP$. Then every group is Effros.
\end{corollary}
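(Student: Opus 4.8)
The plan is to combine Proposition \ref{proposition_PB} with Theorem \ref{theorem_main_positive}, which together do essentially all of the work. Fix an arbitrary group $G$; to verify that $G$ is Effros, I would start with an arbitrary transitive action of $G$ on a non-meager space $X$ and aim to show that it is micro-transitive. The key observation is that Theorem \ref{theorem_main_positive} already delivers micro-transitivity under the single hypothesis that $Ux\in\BP(X)$ for every open $U\subseteq G$ and every $x\in X$. So the entire task reduces to checking this Baire-property condition for the given action.

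This is exactly where $\BP$ enters. Since $X$ is a space (that is, separable and metrizable) to which Proposition \ref{proposition_PB} applies, that proposition yields $\BP(X)=\PP(X)$; in particular, every set of the form $Ux$ is a subset of $X$ and hence automatically lies in $\BP(X)$. With the hypothesis of Theorem \ref{theorem_main_positive} thereby verified for free, the action is micro-transitive, and since the transitive action on a non-meager space was arbitrary, $G$ is Effros. I do not anticipate any genuine obstacle here: the substantive arguments are confined to Theorem \ref{theorem_main_positive} and to the transfer of $\BP$ from $\omega^\omega$ to arbitrary spaces carried out in Proposition \ref{proposition_PB}. The only point requiring a moment of care is the bookkeeping that $Ux$ really lives in $X$ (rather than in $G$ or in a product), which is precisely what makes Proposition \ref{proposition_PB} the relevant instance to invoke.
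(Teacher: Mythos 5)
Your proposal is correct and is exactly the paper's argument: Proposition \ref{proposition_PB} makes the hypothesis $Ux\in\BP(X)$ of Theorem \ref{theorem_main_positive} hold automatically, and that theorem then gives micro-transitivity of every transitive action on a non-meager space. The paper's proof is the one-line ``Combine Theorem \ref{theorem_main_positive} and Proposition \ref{proposition_PB},'' which you have simply unpacked.
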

\begin{proof}
Combine Theorem \ref{theorem_main_positive} and Proposition \ref{proposition_PB}.
\end{proof}

\begin{corollary}\label{corollary_closed_graph_PB}
Assume $\BP$. Let $G$ and $H$ be groups, and let $\varphi:G\longrightarrow H$ be a homomorphism. If $G$ is non-meager then $\varphi$ is continuous.	
\end{corollary}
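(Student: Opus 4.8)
The plan is to mirror the structure of the proof of Corollary \ref{corollary_closed_graph}, replacing its appeal to the complexity hypothesis (van Mill's Theorem \ref{theorem_van_mill}) with the blanket positive result available under $\BP$. Recall from Section \ref{section_preliminaries} that, since $\varphi$ is a homomorphism, $\Gr(\varphi)$ carries a natural group structure as a subgroup of $G\times H$; in particular it is itself a group to which our earlier results apply.

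First I would invoke Corollary \ref{corollary_main_PB}: assuming $\BP$, \emph{every} group is Effros, so in particular $\Gr(\varphi)$ is an Effros group. This is the step that does all the real work, and it is precisely where the set-theoretic assumption enters — it frees us from having to assume anything about the descriptive complexity of $\varphi$ (e.g.\ that $\Gr(\varphi)$ be analytic, as in Corollary \ref{corollary_closed_graph}).

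Having established that $\Gr(\varphi)$ is Effros, the hypothesis that $G$ is non-meager puts us exactly in the situation of Theorem \ref{theorem_closed_graph}, whose conclusion is that $\varphi$ is continuous. So the entire argument reduces to combining Theorem \ref{theorem_closed_graph} with Corollary \ref{corollary_main_PB}. I do not expect any genuine obstacle here: the content has been isolated into those two prior results, and the only thing to check is the trivial observation that $\Gr(\varphi)$ is a group so that both results are applicable to it.
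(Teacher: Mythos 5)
Your proposal is correct and follows exactly the paper's own proof, which simply combines Corollary \ref{corollary_main_PB} (under $\BP$ every group, in particular $\Gr(\varphi)$, is Effros) with Theorem \ref{theorem_closed_graph}. Nothing further is needed.
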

\begin{proof}
Combine Corollary \ref{corollary_main_PB} and Theorem \ref{theorem_closed_graph}.
\end{proof}

\begin{corollary}\label{corollary_open_mapping_PB}
Assume $\BP$. Let $G$ and $H$ be groups, and let $\varphi:G\longrightarrow H$ be a surjective continuous homomorphism. If $H$ is non-meager then $\varphi$ is open.	
\end{corollary}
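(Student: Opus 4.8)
The plan is to reduce the statement to the two results just established in this section, exactly as in the proofs of Corollaries \ref{corollary_main_PB} and \ref{corollary_closed_graph_PB}. First I would apply Corollary \ref{corollary_main_PB}: under the standing hypothesis $\BP$, every group is Effros, so in particular the domain $G$ is an Effros group. This is the only place where the assumption $\BP$ enters the argument, and internally it relies on Proposition \ref{proposition_PB} to guarantee that the Baire-property hypothesis of Theorem \ref{theorem_main_positive} ($Ux\in\BP(X)$ for all relevant $U$ and $x$) is satisfied automatically.

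With $G$ now known to be Effros, the three hypotheses of Theorem \ref{theorem_open_mapping} are all in place: $\varphi:G\longrightarrow H$ is a surjective continuous homomorphism, $G$ is Effros, and $H$ is non-meager by assumption. Invoking that theorem directly yields that $\varphi$ is open, which completes the proof. Concretely, Theorem \ref{theorem_open_mapping} supplies the transitive action $g\cdot h=\varphi(g)h$ of $G$ on $H$ (transitive precisely because $\varphi$ is surjective) and identifies the associated map $\gamma_e$ with $\varphi$ itself, so that micro-transitivity of the action translates into openness of $\varphi$ through Proposition \ref{proposition_micro-transitivity}.

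Since both ingredients are already in hand, there is no genuine obstacle in this derivation: the entire content lies in the earlier Theorem \ref{theorem_main_positive} (where the Baire property is leveraged) and Theorem \ref{theorem_open_mapping} (where the group-theoretic translation is carried out). The only thing one must check is that the hypotheses match, and they do so verbatim.
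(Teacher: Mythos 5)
Your proposal is correct and follows exactly the paper's own argument: it applies Corollary \ref{corollary_main_PB} to conclude that $G$ is Effros under $\BP$, and then invokes Theorem \ref{theorem_open_mapping} with the non-meagerness of $H$ to get openness of $\varphi$. The additional unpacking of how Theorem \ref{theorem_open_mapping} works internally is accurate but not needed beyond citing that theorem.
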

\begin{proof}
Combine Corollary \ref{corollary_main_PB} and Theorem \ref{theorem_open_mapping}.
\end{proof}

\section{A method of Vidny\'anszky}\label{section_vidnyanszky}

In this section we will discuss a method developed by Z. Vidny\'anszky in \cite{vidnyanszky}. First we will state the original version (see Theorem \ref{theorem_zoltan}), and then deduce a ``multivariable'' generalization (see Theorem \ref{theorem_zoltan_multivariable}), which will be needed in Section \ref{section_counterexample_V=L}. This method is a ``black-box'' version of the technique that is mostly known for the applications given by A. W. Miller in \cite{miller_1989}, and has spawned many more since then.\footnote{\,The very first instance of this idea, however, seems to have appeared in a paper by Erd\H{o}s, Kunen and Mauldin (see \cite[Theorems 13, 14 and 16]{erdos_kunen_mauldin}).} The purpose of this technique is to construct coanalytic examples of certain pathological sets of reals under the assumption $\mathsf{V=L}$.

We will assume some familiarity with the basics of recursion theory (see \cite{odifreddi}) and effective descriptive set theory (see \cite{moschovakis}). On the other hand, no previous knowledge of the theory of $\mathsf{L}$ is required apart from Theorem~\ref{theorem_zoltan}. We will denote Turing reduction by $\leqT$ and Turing equivalence by $\equivT$. When $M$ is a space in which it makes sense to consider Turing reduction, we will say that $S\subseteq M$ is \emph{cofinal in the Turing degrees} if for every $x\in M$ there exists $y\in S$ such that $x\leqT y$.

As in \cite[Definition 1.2]{vidnyanszky}, given $F\subseteq M^{\leq\omega}\times B\times M$, where $M$ and $B$ are sets of size $\omega_1$, we will say that $X\subseteq M$ is \emph{compatible with $F$} if there exist enumerations $B=\{p_\alpha:\alpha<\omega_1\}$, $X=\{x_\alpha:\alpha<\omega_1\}$ and, for every $\alpha<\omega_1$, a sequence $A_\alpha\in M^{\leq\omega}$ that is an enumeration of $\{x_\beta:\beta<\alpha\}$ in type $\leq\omega$ such that $x_\alpha\in F_{(A_\alpha,p_\alpha)}$ for every $\alpha<\omega_1$. Here, given $(A,p)\in M^{\leq\omega}\times B$, we use the notation $F_{(A,p)}=\{x\in M:(A,p,x)\in F\}$. Intuitively, one should think of $A_\alpha$ as enumerating the portion of the desired set $X$ constructed before stage $\alpha$. The section $F_{(A_\alpha,p_\alpha)}$ consists of the admissible candidates to be added at stage $\alpha$, where $p_\alpha$ encodes the current condition to be satisfied. The following result first appeared as \cite[Theorem 1.3]{vidnyanszky}.

\begin{theorem}[Vidny\'anszky]\label{theorem_zoltan}
Assume $\mathsf{V=L}$. Let $M=2^\omega$, and let $B$ be an uncountable Borel space.\footnote{\,More generally, one could replace $2^\omega$ with any other uncountable Polish space with a natural notion of Turing reducibility. A similar remark holds for Theorem \ref{theorem_zoltan_multivariable}.} Assume that $F\subseteq M^{\leq\omega}\times B\times M$ is coanalytic, and that for all $(A,p)\in M^{\leq\omega}\times B$ the section $F_{(A,p)}$ is cofinal in the Turing degrees. Then there exists a coanalytic $X\subseteq M$ that is compatible with $F$.
\end{theorem}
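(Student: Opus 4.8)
The plan is to carry out a transfinite recursion of length $\omega_1$, building $X=\{x_\alpha:\alpha<\omega_1\}$ one point at a time while simultaneously ensuring that the resulting set is compatible with $F$ and is coanalytic. The key leverage comes from $\VL$: under this assumption there is a $\bS^1_2$-good wellordering of $2^\omega$ of order type $\omega_1$, and more to the point, Gödel's condensation argument yields that membership in an initial segment of the constructible order can be captured by a $\bS^1_2$ formula. First I would fix a $\bS^1_2$ wellordering $<_L$ of $M=2^\omega$ and a recursive bijection between $B$ and $M$ (legitimate since $B$ is an uncountable Borel, hence Polish, space), so that I may as well assume $B=M=2^\omega$ with a single coanalytic set $F\subseteq M^{\leq\omega}\times M\times M$ whose every section $F_{(A,p)}$ is cofinal in the Turing degrees. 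I would then recursively enumerate the ``conditions'' $p_\alpha$ to be handled, and at stage $\alpha$ choose $x_\alpha$ to be the $<_L$-least element of $F_{(A_\alpha,p_\alpha)}$ that Turing-computes (or at least does not fall below, in the relevant sense) everything produced so far, where $A_\alpha$ is a chosen $M^{\leq\omega}$-enumeration of $\{x_\beta:\beta<\alpha\}$.

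The central mechanism, and the reason the construction produces a \emph{coanalytic} set, is the interplay between the cofinality hypothesis and the wellordering. Because each section is cofinal in the Turing degrees, at every stage $\alpha$ there genuinely is an admissible candidate lying Turing-above the (countably many) earlier choices, so the recursion never stalls and $x_\alpha$ is well-defined. The point of always taking the $<_L$-least such candidate is that it pins down $x_\alpha$ canonically from the data available at stage $\alpha$, which in turn lets one express ``$x\in X$'' by a formula asserting the existence of a countable, internally coherent initial fragment of the construction whose definable least-choices land $x$ in the set. The coherence of such a fragment, together with the $\bS^1_2$-definability of $<_L$ and the coanalyticity of $F$, is what I expect to collapse the complexity of $X$ down to $\bP^1_1$: one quantifies over a real coding a countable model-fragment (an existential real quantifier) and then asserts, in an arithmetical-over-$F$-and-$<_L$ way, that it correctly witnesses $x$'s membership. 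The Turing-cofinality is exactly what guarantees that every element of $X$ is coded by a \emph{countable} such fragment — i.e.\ that one only ever needs to look below a single real — which is what keeps the leading quantifier existential rather than universal over reals.

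The hard part will be verifying that the naive complexity bound on ``$x\in X$'' really does come out coanalytic rather than merely $\bS^1_2$, and this is precisely where Vidny\'anszky's black-box formulation earns its keep. The delicate point is the Löwenheim–Skolem / condensation step: one must show that whenever $x\in X$, there is a \emph{countable} elementary-enough fragment $L_\gamma$ correctly computing the relevant initial segment of the recursion, so that the witness to $x\in X$ can be taken to be a single real coding such an $L_\gamma$, and — crucially — that the correctness of this witness is itself checkable coanalytically using the hypothesis that $F$ is $\bP^1_1$ and $<_L$ is $\bS^1_2$. I would handle this by making the ``condition''-data $p_\alpha$ and the enumerations $A_\alpha$ definable within the fragment and by appealing to Shoenfield absoluteness to transfer the defining statements between $L_\gamma$ and $V$. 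Once the recursion is set up so that $X$ admits such a local, countable-witness characterization, compatibility with $F$ is immediate from the stagewise construction (each $x_\alpha\in F_{(A_\alpha,p_\alpha)}$ by choice), and the complexity computation delivers the coanalyticity. I would organize the write-up so that all the set-theoretic heavy lifting is quarantined inside the proof of this single complexity claim, leaving the recursion itself essentially bookkeeping.
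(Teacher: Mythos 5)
First, a point of order: the paper does not prove this statement at all --- it is quoted verbatim from \cite[Theorem 1.3]{vidnyanszky} --- so there is no in-paper proof to compare against. Your plan is recognizably the right general strategy (the Erd\H{o}s--Kunen--Mauldin/Miller technique that Vidny\'anszky's theorem packages), but the step you yourself flag as ``the hard part'' is described in a way that, as written, fails, and it is precisely the step that carries the entire content of the theorem. You propose to express ``$x\in X$'' as ``there exists a real coding a countable, coherent fragment of the construction that puts $x$ into the set,'' and you assert that keeping the leading quantifier \emph{existential} is what yields coanalyticity. That is backwards: an unrestricted existential real quantifier in front of a $\Pi^1_1$ (or even arithmetical) matrix produces a $\Sigma^1_2$ (respectively $\Sigma^1_1$) set, not a $\Pi^1_1$ one. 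The actual mechanism is that the existential quantifier must be \emph{bounded}: one arranges $x\in X$ iff $\exists z\in\Delta^1_1(x)\,\theta(z,x)$ with $\theta$ a $\Pi^1_1$ formula, and such restricted quantification preserves $\Pi^1_1$ by the Spector--Gandy theorem --- exactly the tool this paper invokes in the proof of Theorem \ref{theorem_zoltan_multivariable}. Equivalently, the witnessing fragment must be found inside $L_{\omega_1^x}$, not merely somewhere in the universe.

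This is also where the Turing-cofinality hypothesis really enters, and your use of it is slightly off target. Choosing $x_\alpha$ Turing-above the join of the previously constructed reals is not enough: one must choose $x_\alpha$ above a \emph{master code} for a countable level $L_\gamma$ that contains $A_\alpha$, $p_\alpha$, the relevant initial segment of $<_L$, and correctly verifies the construction up to stage $\alpha$; only then do condensation and absoluteness guarantee that $L_{\omega_1^{x_\alpha}}$ (equivalently, the $\Delta^1_1(x_\alpha)$ reals) contains a correct witness. Note that $x\in L_{\omega_1^x}$ can fail for an arbitrary real $x$ (Gandy basis theorem), so this containment must be engineered deliberately by applying cofinality to such a master code rather than to the raw join; relatedly, certifying ``$x$ is the $<_L$-least admissible candidate'' inside the fragment requires all $<_L$-predecessors of the candidates to live there as well. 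None of this contradicts your outline --- it is how the proof actually goes --- but as written your complexity computation would only deliver a $\Sigma^1_2$ set, and the missing ingredients (bounding the witness quantifier to $\Delta^1_1(x)$, Spector--Gandy, and the master-code trick) are not routine bookkeeping but the heart of the argument.
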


Unfortunately, there are situations in which more than one element needs to be added at every stage. This is the case, for example, when one wants $X$ to be a group (see \cite{kastermans} or \cite{fischer_schrittesser_tornquist}) or a Hamel basis (see \cite[Theorem 9.26 and Lemma 9.27]{miller_1989}). Since Theorem \ref{theorem_zoltan} is not suited for this purpose, we have adapted it as follows. We remark that, although he did not give the general statement, the main idea of the proof of Theorem \ref{theorem_zoltan_multivariable} is also essentially due to Vidny\'anszky (see the comment that follows \cite[Definition 4.8]{vidnyanszky}).

For the remainder of this section, for simplicity, we will assume that $Z=2^\omega$. We will also assume that a recursive partition of $\omega$ into $\xi$ infinite sets is given, where $2\leq\xi\leq\omega$, so that it will be possible to identify $Z^\xi$ with $Z$ for the purposes of Turing reduction. When the space $M$ is in the form $Z^\xi$, we will say that $S\subseteq M$ is \emph{equicofinal in the Turing degrees} if for every $a\in Z$ there exists $x\in S$ such that the following conditions~are~satisfied:
\begin{itemize}
\item $a\leqT x$,
\item $x\equivT x(n)$ for every $n\in\xi$.
\end{itemize}
\begin{theorem}\label{theorem_zoltan_multivariable}
Assume $\mathsf{V=L}$. Let $M=Z^\xi$, where $Z=2^\omega$ and $2\leq\xi\leq\omega$, and let $B$ be an uncountable Borel space. Assume that $F\subseteq M^{\leq\omega}\times B\times M$ is coanalytic, and that for all $(A,p)\in M^{\leq\omega}\times B$ the section $F_{(A,p)}$ is equicofinal in the Turing degrees. Then there exists $X\subseteq M$ such that the following conditions are satisfied:
\begin{itemize}
\item $X$ is compatible with $F$,
\item $\{x(n):x\in X\text{ and }n\in\xi\}$ is coanalytic.
\end{itemize}
\end{theorem}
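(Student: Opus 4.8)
The plan is to reduce the statement to Vidny\'anszky's Theorem~\ref{theorem_zoltan}. Since we have fixed a recursive partition of $\omega$ into $\xi$ infinite pieces, the space $M=Z^\xi$ is recursively homeomorphic to $Z=2^\omega$, and this identification preserves both the coanalyticity of $F$ and the purely combinatorial notion of compatibility. The only genuinely new content is the second bullet of the conclusion, namely that the ``flattened'' set $Y=\{x(n):x\in X\text{ and }n\in\xi\}$ is coanalytic, and this is where equicofinality will do all of the work.

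The first point I would stress is that Theorem~\ref{theorem_zoltan} only guarantees $x_\alpha\in F_{(A_\alpha,p_\alpha)}$; it gives no control over \emph{which} element of each section is selected, so the elements of the resulting $X$ need not witness equicofinality on their own. To force this, I would apply the black box not to $F$ but to
$$
F'=F\cap\{(A,p,x)\in M^{\leq\omega}\times B\times M:x\equivT x(n)\text{ for every }n\in\xi\}.
$$
Each condition $x\equivT x(n)$ is arithmetical in a code for the tuple $x$, uniformly in $n$, and a countable intersection of such conditions is Borel; hence $F'$ is again coanalytic. By the equicofinality hypothesis, for every $(A,p)$ and every $a\in Z$ there is $x\in F_{(A,p)}$ with $a\leqT x$ and $x\equivT x(n)$ for all $n$, so such an $x$ already lies in $F'_{(A,p)}$. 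In particular every section $F'_{(A,p)}$ is cofinal in the Turing degrees, which is exactly the hypothesis of Theorem~\ref{theorem_zoltan}. Applying that theorem to $F'$ yields a coanalytic $X\subseteq M$ compatible with $F'$. Since $F'\subseteq F$, the same enumerations witness that $X$ is compatible with $F$, and by construction every $x\in X$ satisfies $x\equivT x(n)$ for all $n\in\xi$.

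It remains to check that $Y$ is coanalytic, which is the crux of the argument. Writing $\Phi_e$ for the $e$-th Turing functional and using the recursive coding $Z^\xi\cong Z$, I claim that
$$
y\in Y\iff\exists e\in\omega\ \exists n\in\xi\ \big[\Phi_e^y\text{ is total},\ \Phi_e^y\in X,\ \text{and }(\Phi_e^y)(n)=y\big],
$$
where $\Phi_e^y$ is read as a tuple in $Z^\xi$. The forward direction is where equicofinality is essential: if $y=x(n)$ with $x\in X$, then $x\equivT x(n)=y$, so in particular $x\leqT y$, whence $x=\Phi_e^y$ for some index $e$ and the right-hand side holds. The reverse direction is immediate. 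The point of this reformulation is that it has replaced the existential \emph{real} quantifier ``$\exists x\in X$'' (which would only give a $\bS^1_2$ description of $Y$) by existential \emph{number} quantifiers over Turing indices. For fixed $e,n$ the matrix is coanalytic in $y$: totality of $\Phi_e^y$ is arithmetical (hence Borel), the relation $\Phi_e^y\in X$ is coanalytic since $X$ is and $y\mapsto\Phi_e^y$ is continuous on its domain, and $(\Phi_e^y)(n)=y$ is closed. As $\bP^1_1$ is closed under finite conjunctions and under existential number quantification (equivalently, countable unions), it follows that $Y$ is coanalytic.

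The main obstacle, and the step requiring the most care, is precisely the observation that Theorem~\ref{theorem_zoltan} does not by itself produce elements witnessing equicofinality; the equivalence $x\equivT x(n)$ must be \emph{built into} the relation $F'$ as a Borel side condition before the black box is invoked, and one must then verify that this restriction leaves the sections cofinal. Once that is arranged, the complexity computation in the last paragraph is what converts equicofinality into coanalyticity of $Y$: it is exactly the Turing bound $x\leqT y$ that allows one to trade the costly real quantifier for a cheap quantifier over computations.
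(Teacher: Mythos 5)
Your proposal is correct, and its first half coincides exactly with the paper's argument: the paper also forms $F'=F\cap\{(A,p,x):x\equivT x(n)\text{ for all }n\in\xi\}$, notes that equicofinality of the sections of $F$ yields cofinality of the sections of $F'$, and feeds $F'$ to Theorem \ref{theorem_zoltan}. Where you diverge is in the final complexity computation for $Y=\{x(n):x\in X,\ n\in\xi\}$. The paper keeps the real quantifier but bounds it by $\Delta^1_1(y)$: it writes $y\in X_n$ iff $\exists x\in\Delta^1_1(y)\,\big(x(n)=y\wedge\phi(x,a)\big)$, using the fact that $x\leqT y$ implies $x\in\Delta^1_1(y)$, and then invokes the Spector--Gandy theorem to conclude that this restricted quantifier preserves $\Pi^1_1$. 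You instead exploit the Turing bound directly, replacing $\exists x\leqT y$ by $\exists e\in\omega\,(x=\Phi_e^y)$, so that the real quantifier becomes a number quantifier and only elementary closure properties of $\bP^1_1$ (continuous preimages on the $\Pi^0_2$ totality domain, finite intersections, countable unions) are needed. Both routes hinge on the same key point --- that compatibility with $F'$ forces $x\leqT x(n)=y$, so the witness $x$ is computable from $y$ --- but yours is more elementary and self-contained, at the cost of writing out the functional-by-functional decomposition, whereas the paper's is a one-line appeal to a standard (and strictly stronger) theorem of effective descriptive set theory. Your argument is complete as stated; the only detail worth making explicit is that $\{y:\Phi_e^y\text{ is total}\}$ is $\Pi^0_2$ and $y\longmapsto\Phi_e^y$ is continuous on it, which you do note.
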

\begin{proof}
First define $F'\subseteq M^{\leq\omega}\times B\times M$ by declaring $(A,p,x)\in F'$ iff
$$
(A,p,x)\in F\text{ and }\forall n\in\xi\,\big(x\equivT x(n)\big).
$$
Observe that $F'$ is coanalytic, and that for every $(A,p)\in M^{\leq\omega}\times B$ the section $F'_{(A,p)}$ is cofinal in the Turing degrees. Therefore, by Theorem \ref{theorem_zoltan}, there exists a coanalytic $X\subseteq M$ that is compatible with $F'$ (hence with $F$ as well). In particular, we can fix $a\in Z$ and a $\Pi^1_1$ formula $\phi(x,y)$ such that $x\in X$ iff $\phi(x,a)$. Set
$$
X_n=\{x(n):x\in X\}
$$
for $n\in\xi$. It will be enough to show that each $X_n$ is coanalytic. So fix $n$. Define
$$
\theta(x,y,z)\text{ iff }\big(x(n)=y\text{ and }\phi(x,z)\big),
$$
and observe that $\theta$ is also $\Pi^1_1$. Finally, define
$$
\psi(y,z)\text{ iff }\exists x\in\Delta^1_1(y)\,\theta(x,y,z),
$$
and observe that $\psi$ is $\Pi^1_1$ by the Spector-Gandy Theorem (see \cite[Corollary 29.3]{miller_1995}).\footnote{\,Miller works in $\omega^\omega$, but all the relevant arguments carry over to $2^\omega$. Alternatively, one can apply the second part of \cite[Theorem 4D.3]{moschovakis} to the Spector pointclass $\Gamma=\Pi^1_1(a)$.} Using the well-known fact that $x\leqT y$ implies $x\in\Delta^1_1(y)$, it is straightforward to verify that $y\in X_n$ iff $\psi(y,a)$, which concludes the proof.
\end{proof}

\section{A counterexample under $\mathsf{V=L}$}\label{section_counterexample_V=L}

The main result of this section is Theorem \ref{theorem_counterexample_V=L}, which gives a consistent example of a discontinuous group homomorphism between Polish groups whose graph is coanalytic. Notice that the complexity of the graph is as low as possible by Corollary \ref{corollary_closed_graph}. Furthermore, this result yields a consistent example of a coanalytic non-Effros group (see Corollary \ref{corollary_counterexample_V=L}), thus showing that Theorem \ref{theorem_van_mill} is sharp.

We begin with the ``coding lemma'' needed in the proof of Theorem \ref{theorem_counterexample_V=L}. Throughout this section, we will make the same conventions described in the paragraph that precedes Theorem \ref{theorem_zoltan_multivariable}. Similarly, we will identify $2^\omega\times 2^\omega$ and $2^\omega$ for the purposes of Turing reduction.
\begin{lemma}\label{lemma_equicofinal}
Let $a,z\in 2^\omega$, and let $\{(z_n,w_n):n\in\omega\}$ be a countable subset of $2^\omega\times 2^\omega$. Then there exists $w\in 2^\omega$ satisfying the following properties:
\begin{itemize}
\item\label{lemma_equicofinal_equivalent} $\big((z_m+z,w_m+w):m\in\omega\big)\equivT (z_n+z,w_n+w)$ for every $n\in\omega$,
\item\label{lemma_equicofinal_high} $a\leqT (z_n+z,w_n+w)$ for every $n\in\omega$.
\end{itemize}
\end{lemma}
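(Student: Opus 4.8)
The plan is to build $w$ by a single explicit coding and then verify both bullets by elementary Turing-reduction computations, where $+$ denotes the (coordinatewise, modulo $2$) group operation of $2^\omega$. Fix once and for all a ``master'' real
$$
d_0=a\oplus z\oplus\bigoplus_{m\in\omega}(z_m\oplus w_m),
$$
using the usual recursive join, so that $d_0$ computes $a$, $z$, and every $z_m$ and $w_m$. Since $d_0$ depends only on the given data and not on the unknown $w$, there is no circularity. Writing $P_n=(z_n+z,w_n+w)$, viewed as an element of $2^\omega$ under the fixed identification of $2^\omega\times 2^\omega$ with $2^\omega$, my first observation is that both bullets follow at once from the single estimate $d_0\leqT P_n$ for every $n$. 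Indeed, $a\leqT d_0\leqT P_n$ yields the second bullet; for the first, the inequality $P_n\leqT\big((z_m+z,w_m+w):m\in\omega\big)$ is trivial since $P_n$ is the $n$-th coordinate of that sequence, while conversely $d_0\leqT P_n$ lets $P_n$ recover $w_n$, hence $w=w_n+(w_n+w)$, and therefore every $w_m+w$ and $z_m+z$, giving the reverse reduction.

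So everything reduces to arranging $d_0\leqT w_n+w$ for all $n$ simultaneously (the first coordinate then only helps). Here is the obstacle I expect to be the crux: a naive coding of $d_0$ into $w$ fails, because forming $w_n+w$ adds the fixed real $w_n$ across all coordinates and corrupts any such code, and the reduction witnessing $d_0\leqT w_n+w$ is \emph{not} permitted to consult $w_n$ as an oracle. The idea is to pre-compensate for this corruption on a reserved block. Fix a recursive partition $\omega=\bigsqcup_{n\in\omega}B_n$ into infinitely many infinite recursive sets, and let $b^n_0<b^n_1<\cdots$ recursively enumerate $B_n$. Define $w$ by setting, for $i=b^n_j\in B_n$,
$$
w(b^n_j)=d_0(j)+w_n(b^n_j).
$$
Since the $B_n$ partition $\omega$, this determines a single $w\in 2^\omega$.

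Now fix $n$ and read $w_n+w$ along $B_n$: for every $j$ one computes
$$
(w_n+w)(b^n_j)=w_n(b^n_j)+w(b^n_j)=w_n(b^n_j)+d_0(j)+w_n(b^n_j)=d_0(j),
$$
so the map $j\mapsto(w_n+w)(b^n_j)$ is exactly $d_0$. As the positions $b^n_j$ are recursive for the fixed $n$, this procedure is computable from the oracle $w_n+w$, which gives $d_0\leqT w_n+w\leqT P_n$; by the reduction described in the first paragraph, both bullets follow. The delicate point is precisely this pre-compensation: it succeeds only because we may construct $w$ with full knowledge of the $w_n$, and because distinct requirements are assigned disjoint blocks, so that the terms $w_n(b^n_j)$ cancel on $B_n$ without any interference across different values of $n$.
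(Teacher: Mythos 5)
Your proof is correct and uses essentially the same idea as the paper: reserve a recursive block of coordinates for each $n$, and on that block pre-add $w_n$ into $w$ so that the sum $w_n+w$ reveals a code for all the given data, from which both reductions follow. The only cosmetic difference is that you code a single master join $d_0$ on one block per $n$, whereas the paper splits the same information over three sub-blocks per $n$.
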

\begin{proof}
Fix infinite $\Omega_{(n,\varepsilon)}\subseteq\omega$ for $(n,\varepsilon)\in\omega\times 3$ such that the following conditions hold:
\begin{itemize}
\item $\Omega_{(n,\varepsilon)}\cap\Omega_{(m,\delta)}=\varnothing$ whenever $(n,\varepsilon)\neq (m,\delta)$,
\item $\bigcup\{\Omega_{(n,\varepsilon)}:(n,\varepsilon)\in\omega\times 3\}=\omega$,
\item $\{(n,\varepsilon,k)\in\omega\times 3\times\omega:k\in\Omega_{(n,\varepsilon)}\}$ is recursive.
\end{itemize}
Given $x_n\in 2^\omega$ for $n\in\omega$, let $\oplus(x_n:n\in\omega)$ denote a uniform recursive way of coding the $x_n$ into a single element of $2^\omega$. Given $\varepsilon\in 3$, set
$$
\left.
\begin{array}{lcl}
& & u_\varepsilon=\left\{
\begin{array}{ll}
a & \text{if }\varepsilon=0,\\
\oplus(z+z_n:n\in\omega) & \text{if }\varepsilon=1,\\
\oplus(w_n:n\in\omega) & \text{if }\varepsilon=2.
\end{array}
\right.
\end{array}
\right.
$$
Also fix bijections $\pi_{(n,\varepsilon)}:\omega\longrightarrow\Omega_{(n,\varepsilon)}$ for $(n,\varepsilon)\in\omega\times 3$ such that the function $(n,\varepsilon,k)\longmapsto\pi_{(n,\varepsilon)}(k)$ is recursive. Define
$$
w(k)=w_n(k)+u_\varepsilon\left(\pi_{(n,\varepsilon)}^{-1}(k)\right)
$$
for every $k\in\Omega_{(n,\varepsilon)}$. In order to see that $w$ has the desired properties, pick $n\in\omega$. The fundamental observation is that from $(w_n+w)\upharpoonright\Omega_{(n,\varepsilon)}$ it is possible to recover $u_\varepsilon$ in a recursive way. This shows that $a\leqT w_n+w$ and $\oplus(z+z_m:m\in\omega)\leqT w_n+w$.

It remains to see that $\oplus(w_m+w:m\in\omega)\leqT w_n+w$, which is only slightly more difficult. As above, one sees that $\oplus(w_m:m\in\omega)\leqT w_n+w$, and in particular $w_n\leqT w_n+w$. Now it is clear that $w\leqT w_n+w$, hence $\oplus(w_m+w:m\in\omega)\leqT w_n+w$, as desired.
\end{proof}

\begin{theorem}\label{theorem_counterexample_V=L}
Assume $\mathsf{V=L}$. Then there exists a discontinuous homomorphism $\varphi:2^\omega\longrightarrow 2^\omega$ with coanalytic graph.	
\end{theorem}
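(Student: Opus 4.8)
The plan is to build $\Gr(\varphi)$ as an increasing union of countable subgroups of $2^\omega\times 2^\omega$ by a recursion of length $\omega_1$, feeding the recursion through Theorem \ref{theorem_zoltan_multivariable}. I identify $2^\omega\times 2^\omega$ with $Z=2^\omega$ for the purposes of Turing reducibility and take $\xi=\omega$, so that $M=Z^\omega$ and a single $x\in M$ codes the countably many graph-points adjoined at one stage, namely a whole coset of the subgroup built so far: since $2^\omega\times 2^\omega$ is a Boolean group, the subgroup generated by one new element $(z,w)$ together with a subgroup $\Lambda$ is exactly $\Lambda\cup\big((z,w)+\Lambda\big)$. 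The object returned by Theorem \ref{theorem_zoltan_multivariable} is $\{x(n):x\in X,\ n\in\omega\}$, which will turn out to be $\Gr(\varphi)$ with the identity removed; as adding one point preserves coanalyticity, this yields the desired coanalytic graph.

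Next I design the condition space and the relation $F$. I let $B=2^\omega$ carry two kinds of tasks under a fixed recursive decoding: \emph{discontinuity tasks} indexed by $k\in\omega$, to be placed at the finite stages, and \emph{surjectivity tasks} indexed by the reals $r\in 2^\omega$, placed at the stages $\geq\omega$ and arranged (using $|B|=\omega_1=|2^\omega|$) so that every real occurs. Given $A\in M^{\leq\omega}$, I recover the current subgroup $\Lambda_A$ generated by the coordinates of $A$, together with its canonical recursive enumeration $\{(z_n,w_n):n\in\omega\}$. I then declare $(A,p,x)\in F$ iff $x$ is the canonical coset-enumeration $\big((z_n+z,\,w_n+w):n\in\omega\big)$ of some coset $(z,w)+\Lambda_A$ with $z\notin\pi_1(\Lambda_A)$, where the direction $z$ is constrained by the task coded by $p$: for the discontinuity task of index $k$ I require $z=e_k$ (the $k$-th unit vector) and $w(0)=1$; for the surjectivity task of $r$ I require $z\in r+\pi_1(\Lambda_A)$ when $r\notin\pi_1(\Lambda_A)$, and otherwise allow any admissible $z$. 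All of these clauses are Borel in $(A,p,x)$, so $F$ is in particular coanalytic.

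The heart of the matter is verifying that every section $F_{(A,p)}$ is equicofinal in the Turing degrees, and this is precisely the content of Lemma \ref{lemma_equicofinal}: given $a\in 2^\omega$, applying the lemma to the base points $\{(z_n,w_n)\}=\Lambda_A$ and the shift $z$ dictated by the task produces a second coordinate $w$ for which the whole coset lies above $a$ and each of its points is Turing-equivalent to the enumerating tuple; for a discontinuity task I additionally flip $w(0)$ to $1$, which changes $w$ in one coordinate and hence preserves all the Turing relations. With $F$ coanalytic and all sections equicofinal, Theorem \ref{theorem_zoltan_multivariable} delivers a set $X$ compatible with $F$ whose coordinate-projection is coanalytic.

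It remains to read off the conclusion. Because each adjoined $x_\alpha$ enumerates a coset in a direction $z\notin\pi_1(\Lambda_{A_\alpha})$, the accumulated set stays a subgroup whose projection to the first coordinate is injective; since the surjectivity tasks exhaust $2^\omega$, this projection is also onto, so the accumulated subgroup is exactly $\Gr(\varphi)$ for a total homomorphism $\varphi:2^\omega\to 2^\omega$. The discontinuity tasks force $\varphi(e_k)(0)=1$ for every $k$, while $e_k\to 0$ and $\varphi(0)=0$, so $\varphi$ is discontinuous at $0$. Finally $\{x(n):x\in X,\ n\in\omega\}$ equals $\Gr(\varphi)$ minus the identity and is coanalytic by Theorem \ref{theorem_zoltan_multivariable}, whence $\Gr(\varphi)$ is coanalytic. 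I expect the main obstacle to be organizational rather than analytic: packaging ``be the graph of a total, discontinuous homomorphism'' into a single coanalytic relation $F$ whose sections all stay equicofinal, and checking that along the actual run of the recursion every legal choice keeps the first projection injective (at the finite stages this rests on the independence of the $e_k$). The genuinely delicate degree-theoretic step, producing equicofinal witnesses inside each section, has already been isolated as Lemma \ref{lemma_equicofinal}.
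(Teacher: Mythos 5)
Your overall architecture coincides with the paper's: work in $M=Z^\omega$ with $Z=2^\omega\times 2^\omega$, add one coset of the current countable subgroup at each stage (using that $Z$ is Boolean, so $\Lambda\cup\big((z,w)+\Lambda\big)$ is again a group), use $B=2^\omega$ to force the domain of the homomorphism to become all of $2^\omega$, verify equicofinality of the sections via Lemma \ref{lemma_equicofinal}, and read off coanalyticity from Theorem \ref{theorem_zoltan_multivariable}. The totality tasks, the graph-preservation argument ($z$ outside the current domain), and the bookkeeping are essentially those of the paper.

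The gap is in your discontinuity mechanism. You want the tasks ``$z=\mathbf{e}_k$ and $w(0)=1$'' to be ``placed at the finite stages,'' but the framework gives you no control over the order in which the elements of $B$ are processed: in the definition of compatibility the enumeration $B=\{p_\alpha:\alpha<\omega_1\}$ is \emph{existentially} quantified, and Theorem \ref{theorem_zoltan_multivariable} only asserts that \emph{some} enumeration works, so all you may legitimately use is that every $p\in B$ is treated at some stage. If the discontinuity task for $\mathbf{e}_k$ is reached at a stage $\alpha$ at which $\mathbf{e}_k$ already lies in the domain built so far --- which can happen, for instance, because totality tasks for $\mathbf{e}_k$, or for reals whose span together with the current domain contains $\mathbf{e}_k$, were treated earlier --- then your clause ``$z=\mathbf{e}_k$ and $z$ outside the current domain'' makes the section $F_{(A_\alpha,p_\alpha)}$ empty, destroying equicofinality; and no fallback clause can retroactively impose $\varphi(\mathbf{e}_k)(0)=1$ once the value $\varphi(\mathbf{e}_k)$ has been determined by the group generated so far. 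The paper sidesteps this order-dependence entirely by seeding the recursion with a fixed countable subgroup $G_0=\Gr(\varphi_0)$ that is \emph{dense} in $2^\omega\times 2^\omega$ (take $\varphi_0(\mathbf{e}_n)=\mathbf{e}_{\pi(n)}$ with every fiber of $\pi$ infinite, and always include $G_0$ in the decoding $A^\ast$ of $A$): since the final graph contains a dense subset of the square, it cannot be closed and proper, hence cannot be the graph of a continuous map, no matter in which order the conditions are met. Your argument becomes correct if you replace the discontinuity tasks by such a dense seed (or otherwise make the discontinuity requirement order-independent); as written, it does not go through.
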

\begin{proof}
Set $Z=2^\omega\times 2^\omega$ and $M=Z^\omega$. We begin by constructing a countable dense subgroup $G_0$ of $Z$ that is the graph of a homomorphism $\varphi_0$ between (countable) subgroups of $2^\omega$. First define $\mathbf{e}_n\in 2^\omega$ for $n\in\omega$ by setting
$$
\left.
\begin{array}{lcl}
& & \mathbf{e}_n(m)=\left\{
\begin{array}{ll}
1 & \text{if }m=n,\\
0 & \text{if }m\neq n
\end{array}
\right.
\end{array}
\right.
$$
for $m\in\omega$. Let $E$ denote the subgroup of $2^\omega$ generated by $\{\mathbf{e}_n:n\in\omega\}$. Fix $\pi:\omega\longrightarrow\omega$ such that $\pi^{-1}(n)$ is infinite for every $n\in\omega$. Let $\varphi_0:E\longrightarrow 2^\omega$ be the unique homomorphism such that $\varphi_0(\mathbf{e}_n)=\mathbf{e}_{\pi(n)}$ for each $n$. Set $G_0=\Gr(\varphi_0)$, and let $G_0=\{g_n:n\in\omega\}$ be an enumeration.

Our plan is to construct a set $X\subseteq M$ such that
$$
G=G_0\cup\{x(n):x\in X\text{ and }n\in\omega\}
$$
is the graph of a homomorphism $\varphi:2^\omega\longrightarrow 2^\omega$. Since $G_0\subseteq G$ and $G_0$ is dense in $Z$, it is clear that $\varphi$ will be discontinuous. On the other hand, using Theorem \ref{theorem_zoltan_multivariable}, we will make sure that $G$ is coanalytic.

Given $A\in M^\eta$, where $\eta\leq\omega$, we will define $A^\ast:\omega\longrightarrow Z$. Fix a bijection
$$
\tau:\omega\longrightarrow (\omega\times\{0\})\cup (\eta\times\{1\}).
$$
Also, in the case $\eta>0$, fix $\sigma_0:\omega\longrightarrow\eta$ and $\sigma_1:\omega\longrightarrow\omega$ such that the function $(\sigma_0,\sigma_1):\omega\longrightarrow\eta\times\omega$ defined by setting $(\sigma_0,\sigma_1)(m)=(\sigma_0(m),\sigma_1(m))$ for $m\in\omega$ is a bijection (these functions are not needed in the case $\eta=0$). Finally, define $A^\ast$ by setting
$$
\left.
\begin{array}{lcl}
& & A^\ast(n)=\left\{
\begin{array}{ll}
g_m & \text{if }\tau(n)=(m,0),\\
A(\sigma_0(m))(\sigma_1(m)) & \text{if }\tau(n)=(m,1)
\end{array}
\right.
\end{array}
\right.
$$
for $n\in\omega$. The purpose of $A^\ast$ is simply to transform $A$ (which is a sequence of sequences of pairs) into a sequence of pairs that enumerates all those encoded by $A$, plus those in $G_0$.

The set of ``conditions'' to be satisfied will be $B=2^\omega$. More precisely, we will make sure that each $p\in B$ will be added to the domain of our homomorphism at some stage. We will denote by $\pi_i:Z\longrightarrow 2^\omega$ for $i=0,1$ the natural projections, so that $u=(\pi_0(u),\pi_1(u))$ for every $u\in Z$.

Define $F\subseteq M^{\leq\omega}\times B\times M$ by declaring $(A,p,x)\in F$ exactly when one of the following conditions holds:
\begin{itemize}
\item $\exists n\in\omega\,\big(p=\pi_0(A^\ast(n))\big)$ and\\$\forall m,n\in\omega\,\big(\pi_0(x(m)-A^\ast(m))=\pi_0(x(n)-A^\ast(n))\big)$ and\\$\forall m,n\in\omega\,\big(\pi_1(x(m)-A^\ast(m))=\pi_1(x(n)-A^\ast(n))\big)$ and\\$\forall m,n\in\omega\,\big(\pi_0(x(m)-A^\ast(m))\neq\pi_0(A^\ast(n))\big)$, or
\item $\forall n\in\omega\,\big(p\neq\pi_0(A^\ast(n))\big)$ and \\$\forall n\in\omega\,\big(\pi_0(x(n)-A^\ast(n))=p\big)$ and\\$\forall m,n\in\omega\,\big(\pi_1(x(m)-A^\ast(m))=\pi_1(x(n)-A^\ast(n))\big)$.
\end{itemize}
The first case is the one in which $p$ already belongs to the domain of our intended partial homomorphism. In this case, we ask that there exist $z,w\in~2^\omega$ such that $z$ is outside of this domain and $x$ is an enumeration of $\{(z_n+z,w_n+w):n\in\omega\}$, where $\{(z_n,w_n):n\in\omega\}$ is an enumeration of $\{A^\ast(n):n\in\omega\}$. The second case is almost the same: the only difference is that we require $z=p$. In either case, we will have made sure that $p$ belongs to the domain of the new partial homomorphism and that $x$ enumerates the elements of $Z$ that have been added to its graph.

It is clear that $F$ is coanalytic (in fact, it is Borel). Given any $(A,p)\in M^{\leq\omega}\times B$, Lemma \ref{lemma_equicofinal} shows that $F_{(A,p)}$ is equicofinal in the Turing degrees. Therefore, by Theorem \ref{theorem_zoltan_multivariable}, we can fix $X\subseteq M$ such that the following conditions are satisfied:
\begin{itemize}
\item $X$ is compatible with $F$,
\item $\{x(n):x\in X\text{ and }n\in\omega\}$ is coanalytic.
\end{itemize}

Let $A_\alpha$, $p_\alpha$ and $x_\alpha$ for $\alpha<\omega_1$ be the enumerations given by the definition of compatibility. Given $0<\alpha\leq\omega_1$, set
$$
G_\alpha=G_0\cup\{x_\beta(n):\beta<\alpha\text{ and }n\in\omega\}.
$$
We claim that each $G_\alpha$ is the graph of a homomorphism $\varphi_\alpha:\pi_0[G_\alpha]\longrightarrow 2^\omega$.

The case $\alpha=0$ has been discussed at the beginning of this proof, and the limit case is straightforward. Now assume that the claim holds for some $\alpha<\omega_1$. Since $x_\alpha\in F_{(A_\alpha,p_\alpha)}$ by compatibility, there exists $(z,w)\in Z$ such that $z\notin\pi_0[G_\alpha]$ and 
$$
G_{\alpha+1}=G_\alpha\cup\{x_\alpha(n):n\in\omega\}=G_\alpha\cup\{(z_n+z,w_n+w):n\in\omega\},
$$
where $\{(z_n,w_n):n\in\omega\}$ is an enumeration of $\{A_\alpha^\ast(n):n\in\omega\}$. On the other hand, it is clear that $\{A_\alpha^\ast(n):n\in\omega\}$ is an enumeration of $G_\alpha$, as $\{A_\alpha(n):n\in\eta\}$ is an enumeration of $\{x_\beta:\beta<\alpha\}$ by compatibility, where $A_\alpha\in M^\eta$. It follows that the right-hand side is the subgroup of $Z$ generated by $G_\alpha\cup\{(z,w)\}$, hence the claim holds for $\alpha+1$. Furthermore, as discussed above, we will have $p_\alpha\in\pi_0[G_{\alpha+1}]$ for every $\alpha<\omega_1$. In conclusion, the group $G=G_{\omega_1}$ will be the graph of the desired homomorphism $\varphi:2^\omega\longrightarrow 2^\omega$.
\end{proof}

\begin{corollary}\label{corollary_counterexample_V=L}
Assume $\mathsf{V=L}$. Then there exists a coanalytic non-Effros group.
\end{corollary}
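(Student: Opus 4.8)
The plan is to combine the two ingredients that we have already assembled: the construction of a suitable discontinuous homomorphism from Theorem \ref{theorem_counterexample_V=L}, and the ``closed graph'' machinery from Theorem \ref{theorem_closed_graph}. In other words, this corollary should be a clean repackaging of the hard work done earlier, entirely parallel to the way Corollary \ref{corollary_counterexample_ZFC} was derived under $\AC$.

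First I would invoke Theorem \ref{theorem_counterexample_V=L} (which assumes $\mathsf{V=L}$) to obtain a discontinuous homomorphism $\varphi:2^\omega\longrightarrow 2^\omega$ whose graph is coanalytic. As observed in Section \ref{section_preliminaries}, the graph $\Gr(\varphi)$ carries a natural group structure, being a subgroup of $2^\omega\times 2^\omega$; moreover it is coanalytic by the very choice of $\varphi$. Hence $\Gr(\varphi)$ is a coanalytic group, and it only remains to verify that it fails to be Effros.

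For this I would apply the contrapositive of Theorem \ref{theorem_closed_graph}. The domain $2^\omega$ is a Polish group, hence a Baire space, hence non-meager in itself. Since $\varphi$ is discontinuous while $2^\omega$ is non-meager, Theorem \ref{theorem_closed_graph} forces $\Gr(\varphi)$ not to be an Effros group (for otherwise $\varphi$ would be continuous). This exhibits the desired coanalytic non-Effros group.

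I do not expect any genuine obstacle at this stage: all of the difficulty has already been absorbed into Theorem \ref{theorem_counterexample_V=L}, whose proof relied on the multivariable version of Vidny\'anszky's method (Theorem \ref{theorem_zoltan_multivariable}) together with the coding Lemma \ref{lemma_equicofinal}. The present corollary is therefore essentially a two-line deduction, and the only points worth a moment's attention are the purely formal checks that $\Gr(\varphi)$ is a group and that $2^\omega$ is non-meager, both of which are immediate.
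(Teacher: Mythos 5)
Your proposal is correct and follows exactly the paper's route: the paper's proof is literally ``Combine Theorems \ref{theorem_counterexample_V=L} and \ref{theorem_closed_graph},'' and you have simply spelled out the same two-step deduction, including the (correct) observations that $\Gr(\varphi)$ is a coanalytic group and that $2^\omega$ is non-meager.
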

\begin{proof}
Combine Theorems \ref{theorem_counterexample_V=L} and \ref{theorem_closed_graph}.
\end{proof}

\subsection*{Acknowledgements}
The author acknowledges the support of the FWF grant P 30823. He also thanks the anonymous referee for a careful reading of the paper and several thoughtful suggestions.

\end{document}